\pgfplotsset{compat=1.4}
\definecolor{softgray}{rgb}{0.92,0.92,0.95}
\definecolor{softblue}{rgb}{0.90,0.92,1.00}
\definecolor{lightgray}{rgb}{0.12,0.12,0.55}
\definecolor{theframe}{gray}{0.75}
\definecolor{theblue} {rgb}{0.02,0.04,0.48}
\definecolor{thegrey} {gray}{0.5}
\definecolor{theshade}{gray}{0.98}
\definecolor{thered}  {rgb}{0.00,0.00,0.00}
\definecolor{thegreen}{rgb}{0.3,0.3,0.3}
\definecolor{softblue}{rgb}{0.90,0.92,1.00}
\newtheorem{lem}{Lemma}[section]
\newtheorem{remark}[lem]{Remark}
\newtheorem{algor}[lem]{Algorithm}
\newcommand{\B}[1]{{\bf #1}}
\newcommand{\Sc}[1]{{\mathcal{#1}}}
\newcommand{\R}[1]{{\rm #1}}
\newcommand{\mB}[1]{{\mathbb{#1}}}
\let\Newset\Set
\renewcommand{\set}[2]{\Newset{#1|#2}}
\title{Robust and Trend-following
Student's t-Kalman Smoothers}
 \author{%
    Aleksandr Y. Aravkin\thanks{IBM T.J. Watson Research Center, Yorktown Heights, NY, 10598
      (saravkin@us.ibm.com)}.
    \and James V. Burke\thanks{Mathematics Dept., University
      of Washington, Seattle, WA 98195 (burke@math.washington.edu).}
    \and Gianluigi Pillonetto
    \thanks{Control and Dynamic Systems Department of Information Engineering at the University of Padova, Padova, Italy (giapi@dei.unipd.it)}
    \hfill\today}
\begin{document}

\maketitle

%

%

\begin{abstract}
We present a Kalman smoothing framework based on 
modeling errors using the heavy tailed Student's t distribution,
along with algorithms, convergence theory, open-source general implementation, 
and several important applications. 
The computational effort per iteration grows linearly with the length of the time series, 
and all smoothers allow nonlinear process and measurement models. 

Robust smoothers form an important subclass of smoothers within this framework.  
These smoothers work in situations where measurements are highly contaminated
by noise or include data unexplained by the forward model.  
Highly robust smoothers are developed by modeling {\it measurement} 
errors using the Student's t distribution, 
and outperform the recently proposed
$\ell_1$-Laplace smoother in extreme situations
with data containing 20\% or more outliers.

A second special application we consider in detail
allows tracking sudden changes in the state. 
It is developed by modeling {\it process} noise using 
the Student's t distribution, and the resulting smoother 
can track sudden changes in the state.

These features can be used separately or in tandem, 
and we present a general smoother algorithm and open source implementation, 
together with convergence analysis that covers a wide range of smoothers. 
A key ingredient of our approach is a technique to deal with the 
non-convexity of the Student's t loss function.  
Numerical results for linear and nonlinear models illustrate the performance
of the new smoothers for robust and tracking applications, 
as well as for mixed problems that have both types of features.

\end{abstract}

\section{Introduction}

The Kalman filter is an efficient recursive algorithm for estimating
the state of a dynamic system \cite{kalman}. Traditional
formulations are based on $\ell_2$ penalties on model deviations,
and are optimal under assumptions of linear dynamics and
Gaussian noise. Kalman filters are used in a wide array of applications
including navigation, medical technologies, and econometrics
\cite{Chui2009,West1991,Spall03}. Many of these problems are nonlinear, and
may require smoothing over past data in both online and offline
applications to significantly improve estimation performance \cite{Gelb}.\\
This paper focuses on two important areas in Kalman smoothing:
robustness with respect to outliers in measurement data, and
improved tracking of quickly changing system dynamics. Robust
filters and smoothers have been a topic of significant interest since the
1970's, e.g. see \cite{Schick1994}. 
Recent efforts have focused
on building smoothers that are robust to outliers in the data~\cite{AravkinL12011,
AravkinIFAC,Farahmand2011}, using convex loss functions such as
$\ell_1$, Huber or Vapnik, in place of the $\ell_2$ penalty~\cite{Hastie01}.

There
have also been recent efforts to design smoothers able to better
track fast system dynamics, e.g. jumps in the state values. A
contribution can be found in \cite{Ohlsson2011} where the Laplace distribution,
rather than the Gaussian, is used to model transition
noise. This introduces an $\ell_1$ penalty on the state evolution
in time, resulting in an estimator interpretable as a dynamic
version of the well known LASSO procedure
\cite{Lasso1996}.

For known dynamics, all of the smoothers mentioned above can be derived
by modeling the
process and the measurement noise using log-concave densities, taking the form
\begin{equation}
\label{logconcave}
\B{p}(\cdot) \propto \exp(-\rho(\cdot)), \quad \rho \text{ convex}\;.
\end{equation}
Formulations exploiting~\eqref{logconcave} are nearly ubiquitous,
in part because they correspond to convex optimization problems
in the linear case. However, in order to model a regime with large outliers
or sudden jumps in the state, we want to look beyond~\eqref{logconcave}
and allow heavy-tailed densities, i.e. distributions
whose tails are not exponentially bounded.
All such distributions necessarily have non-convex loss
functions~\cite[Theorem 2.1]{AravkinFHV:2012}. 

Several interesting candidates are possible, but in this contribution we focus on
the Student's t-distribution for its convenient properties in the context
of the applications we consider.
The Student's t-distribution was successfully applied to a
variety of robust inference applications in \cite{Lange1989},
and is closely related to re-descending influence functions \cite{Hampel}.

In this work, we propose a smoothing framework for 
several applications, including robust and trend smoothing. 
The T-Robust smoother is derived from a dynamic system
with {\it output noise} modeled by the Student's t-distribution. This is a
further robustification of the estimator proposed in \cite{AravkinL12011}, 
which uses the Laplace density. 
The re-descending influence function
of the Student's t guarantees that outliers in the measurements have
less of an effect on the smoothed estimate than any convex loss function.
In practice, the T-Robust smoother performs better than
\cite{AravkinL12011} for cases with a high proportion of outliers.
The T-Trend smoother is similarly derived starting from a
dynamic system with {\it transition noise} modeled by
the Student's t-distribution. This allows
T-Trend to better track sudden changes in the state. 
One may consider using both aspects simultaneously; 
in addition, practitioners need the ability 
to distinguish between different measurements based 
on prior information of measurement fidelity, and 
between different states based on prior knowledge of trend stability.

In the context of Kalman filtering/smoothing, the idea of using
Student's t-distributions to model the system noise for robust
and tracking applications was first proposed in \cite{Fahr1998}.
However, our work differs from that approach in some important aspects.
First, our analysis includes nonlinear measurement and process models.
Second, we provide a novel approach to overcome the non-convexity
of the Student's t-loss function. Third, the approach we propose 
can be used to solve {\it any} smoothing problem
that uses Student's t modeling for any process or measurement components. 

The basic approach differs significantly from the one proposed in \cite{Fahr1998}.
\cite{Fahr1998} proposes using the random information matrix (i.e.
full Hessian) when possible, or its expectation (Fisher information) when
the Hessian is indefinite.
Instead, we propose a modified Gauss-Newton method which builds
information about the curvature of the Student's t-log likelihood
into the Hessian approximation, and is guaranteed to be positive definite.
As we show in Section~\ref{Algorithm},
the new approach is provably convergent,
and unlike the approach in \cite{Fahr1998} uses information
about the relative sizes of the residuals
in computing descent directions. These differences make it
more stable than methods using random information, and more
efficient than methods using Fisher information.\\
The major computational tradeoff in using non-convex penalties
is that the loss function in the convex case is used directly~\cite{AravkinL12011}, i.e. is not approximated,
whereas in the nonconvex case, the loss function must be iteratively
approximated with a local convex approximation. This requires a fundamental
extension of the convergence analysis.  

A conference proceeding previewing this paper appears in~\cite{SYSID2012tks}.
In the current work, we present a general smoothing framework that 
includes the two smoothers presented in~\cite{SYSID2012tks} as special cases, 
together with a generalized convergence theory that covers the entire 
range of smoothers under discussion. 
We also provide an open-source implementation 
of the general algorithm~\cite{ckbs}, 
with a simple interface that enables the user 
to customize which residual or innovation components to model 
using the Student's t penalty. 
Using this implementation, we present an expanded experimental section, 
and new experiments that show how robust and trend smoothing can be done 
simultaneously. Finally, we apply the smoothers to real data. 

The paper is organized as follows.
In Section \ref{StudentT-KF}, we introduce
the multivariate Student's t-distribution, review its advantages
for error modeling over log-concave distributions, and introduce 
the dynamic model class of interest for Kalman smoothing.  
In Section \ref{GenT}, we describe a statistical modeling framework, where we 
can use Student's t to model any process or measurement residual components. 
We describe all objectives that can arise this way, and provide a comprehensive 
method for obtaining approximate second order information for these objectives.
In Section \ref{SpecialCases}, we provide details for three important special 
smoothers:  T-Robust (robust against large measurement noise), T-Trend (able to 
follow sharp changes in the state), and the Double-T smoother (incorporates both aspects).  
In Section \ref{Algorithm}, we present the algorithm and a convergence
theory for the entire framework, which also  
extends the convergence theory developed in~\cite{AravkinL12011}. 
In Section~\ref{SimulationRobust}, we present numerical 
experiments that illustrate the behavior of all three special smoothers,   
include illustrations of linear and nonlinear models, 
and results for real and simulated data.
We end the paper with concluding remarks.
\section{Error Modeling with Student's t}
\label{StudentT-KF}

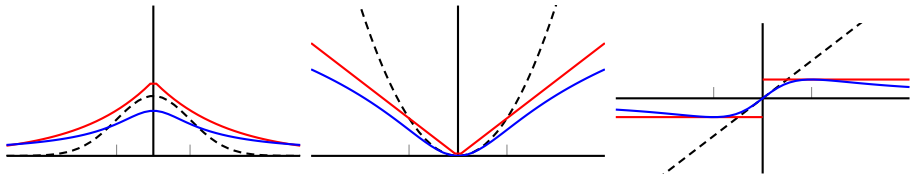
\begin{figure} \label{GLT-KF}
\centering
\begin{tikzpicture}
  \begin{axis}[
    thick,
    width=.3\textwidth, height=2cm,
    xmin=-4,xmax=4,ymin=0,ymax=1,
    no markers,
    samples=50,
    axis lines*=left, 
    axis lines*=middle, 
    scale only axis,
    xtick={-1,1},
    xticklabels={},
    ytick={0},
    ] 
\addplot[domain=-4:+4,densely dashed]{exp(-.5*x^2)/sqrt(2*pi)};
 \addplot[red, domain=-4:+4]{0.5*exp(-.5*abs(x))};
  \addplot[blue, domain=-4:+4]{0.3*exp(-.5*ln(1 + x^2))};
  \end{axis}
\end{tikzpicture}
\begin{tikzpicture}
  \begin{axis}[
    thick,
    width=.3\textwidth, height=2cm,
    xmin=-3,xmax=3,ymin=0,ymax=2,
    no markers,
    samples=50,
    axis lines*=left, 
    axis lines*=middle, 
    scale only axis,
    xtick={-1,1},
    xticklabels={},
    ytick={0},
    ] 
\addplot[domain=-3:+3,densely dashed]{.5*x^2};
 \addplot[red, domain=-3:+3]{.5*abs(x)};
  \addplot[blue, domain=-3:+3]{.5*ln(1 + x^2)};
  \end{axis}
\end{tikzpicture}
\begin{tikzpicture}
  \begin{axis}[
    thick,
    width=.3\textwidth, height=2cm,
    xmin=-3,xmax=3,ymin=-2,ymax=2,
    no markers,
    samples=50,
    axis lines*=left, 
    axis lines*=middle, 
    scale only axis,
    xtick={-1,1},
    xticklabels={},
    ytick={0},
    ] 
\addplot[domain=-3:3,densely dashed]{x};
 \addplot[red, domain=-3:0]{-.5};
  \addplot[red, domain=0:3]{.5};
\addplot[blue, domain=-3:3]{x/(1 + x^2)};
  \end{axis}
\end{tikzpicture}
    \caption{Gaussian, Laplace, and Student's t Densities, Corresponding Negative Log Likelihoods, and Influence Functions.}
\end{figure}


For a vector \(u \in \mB{R}^n\)
and any positive definite matrix \(M \in \mB{R}^{n\times n}\), let
\(\|u\|_M := \sqrt{u^\R{T}Mu}\).
We use the following generalization of the Student's
t-distribution:
\begin{eqnarray}
\label{StudentDensity}
\B{p}(v_k|\mu)
&=&
 \frac{\Gamma (\frac{s + m}{2})}
{\Gamma(\frac{s}{2})\det[\pi s R]^{1/2}}
\left(1 + \frac{\|v_k - \mu\|_{R^{-1}}^2}{s}\right)^{\frac{-(s + m)}{2}}
\end{eqnarray}
where $\mu$ is the mean, $s$ is the degrees of freedom,
$m$ is the dimension of the vector $v_k$,
and $R$ is a positive definite matrix.
A comparison of this distribution
with the Gaussian and Laplacian distribution
appears in Figure \ref{GLT-KF}. Note that the Student's t-distribution
has much heavier tails than the others, and that its influence
function is re-descending, see \cite{Mar} for a discussion of
influence functions. This means that as we pull a
measurement further and further away, its `influence'
decreases to 0, so it is eventually ignored by the model.
Note also that the $\ell_1$-Laplace is peaked at 0, while the
Student's t-distribution is not, and so a Student's t-fit
will not in general drive residuals to be exactly $0$.

Before we proceed with the Kalman smoothing application, 
we review a result from~\cite{AravkinFHV:2012},
illustrating the fundamental modeling advantages of heavy tailed distributions:

\begin{theorem}
\label{UniversalLowerBound}
Consider any scalar density $p$ arising from a symmetric convex
coercive and differentiable penalty $\rho$ via $p(x) = \exp(-\rho(x))$, 
and take any point $t_0$ with $\rho'(t_0) = \alpha_0 > 0$. Then for
all $t_2 > t_1\geq t_0$, the conditional tail distribution induced by $p(x)$
satisfies
\begin{equation}
\label{memoryFreeIneq}
\Pr(|y| > t_2 \mid |y| > t_1) \leq \exp(-\alpha_0[t_2 - t_1])\;.
\end{equation}
\end{theorem}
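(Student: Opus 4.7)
The plan is to reduce the conditional tail probability to a ratio of tail integrals and then exploit convexity of $\rho$ to bound the numerator by a shifted copy of the denominator times $\exp(-\alpha_0(t_2-t_1))$.

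First, by the symmetry of $p$, for any $t \geq 0$ we have $\Pr(|y| > t) = 2\int_t^\infty p(x)\,dx$, so
\begin{equation*}
\Pr(|y| > t_2 \mid |y| > t_1) \;=\; \frac{\int_{t_2}^\infty p(x)\,dx}{\int_{t_1}^\infty p(x)\,dx}.
\end{equation*}
Both integrals are finite: coercivity of $\rho$ forces $p$ to be integrable on $[t_1,\infty)$ and on $[t_2,\infty)$, and the denominator is strictly positive because $p > 0$ everywhere.

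Next I use convexity. Since $\rho$ is convex and differentiable with $\rho'(t_0) = \alpha_0$, the derivative $\rho'$ is nondecreasing, so $\rho'(s) \geq \alpha_0$ for every $s \geq t_0$. Consequently, for every $x \geq t_0$ and every $h \geq 0$,
\begin{equation*}
\rho(x+h) - \rho(x) \;=\; \int_x^{x+h} \rho'(s)\,ds \;\geq\; \alpha_0\, h,
\end{equation*}
which exponentiates to the pointwise bound $p(x+h) \leq p(x)\exp(-\alpha_0 h)$ valid on $[t_0,\infty)$.

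Finally, I apply this bound with $h := t_2 - t_1 \geq 0$ after a change of variables in the numerator. Writing $x = y + (t_2-t_1)$,
\begin{equation*}
\int_{t_2}^\infty p(x)\,dx \;=\; \int_{t_1}^\infty p\bigl(y + (t_2-t_1)\bigr)\,dy \;\leq\; \exp\bigl(-\alpha_0(t_2-t_1)\bigr)\int_{t_1}^\infty p(y)\,dy,
\end{equation*}
where the inequality uses the pointwise bound on $[t_1,\infty) \subseteq [t_0,\infty)$. Dividing by the denominator yields \eqref{memoryFreeIneq}.

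The argument is short because the heavy lifting is done by the two structural hypotheses on $\rho$: differentiability plus convexity gives a uniform lower bound on the slope of $\rho$ on a half-line, and symmetry reduces a two-sided tail statement to a one-sided integral comparison. The only subtlety worth flagging is the role of $t_1 \geq t_0$: without this, the slope bound $\rho' \geq \alpha_0$ need not hold on the entire range of integration, and the proof breaks down. No elaborate estimate or limiting argument is needed.
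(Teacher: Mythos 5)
Your proof is correct. Note that the paper itself does not prove this theorem --- it is quoted as a review of \cite[Theorem 2.1]{AravkinFHV:2012} --- so there is no in-paper argument to compare against; your route (reduce the conditional probability to a ratio of one-sided tail integrals, use convexity to get the uniform slope bound $\rho'(s)\ge\alpha_0$ on $[t_0,\infty)$, hence the pointwise estimate $p(x+h)\le p(x)e^{-\alpha_0 h}$, then shift variables) is the standard and natural one. The only slight imprecision is attributing integrability of the tails to coercivity alone; it is really the linear lower bound $\rho(x)\ge\rho(t_0)+\alpha_0(x-t_0)$, which you derive anyway, that guarantees exponential decay of $p$ and hence finiteness of both integrals.
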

When $t_1$ is large, the condition $|y| > t_1$ indicates that 
we are looking at an outlier. However, as shown by the theorem, {\it any}
log-concave statistical model treats the outlier conservatively, 
dismissing the chance that $|y|$ could be significantly bigger than $t_1$. 
Contrast this behavior with that of the Student's t-distribution.
When $\nu = 1$, the Student's t-distribution is simply the Cauchy
distribution, with a density proportional to $1/(1 + y^2)$.  Then we
have that
\[
 \lim_{t\to\infty} \Pr(|y|>2t \mid |y|>t) =
 \lim_{t\to\infty} \frac{\frac{\pi}{2}-\arctan(2t)}{\frac{\pi}{2} - \arctan(t)}
 = \frac{1}{2}.
\] 
Heavy tailed distributions thus provide a fundamental advantage
in cases where outliers may be particularly large, or, in the second
application we discuss, very sudden trend changes may be present. 

We now turn to the Kalman smoothing framework.  
We use the following general model for the underlying dynamics:
for $k = 1 , \ldots , N$
\begin{equation}
\label{NonlinearStudentModel}
\begin{array}{ccc}
    x_{k} & = & g_k(x_{k-1}) + w_k
    \\
    z_k   & = & h_k(z_k) + v_k
\end{array}
\end{equation}
with initial condition $g_1(x_0) = g_0 + w_1$, with $g_0$ a known constant, and
where $g_k: \mB{R}^n \rightarrow \mB{R}^n$ are known smooth process functions,
and $h_k: \mB{R}^n \rightarrow \mB{R}^{m}$ are known smooth
measurement functions. Moreover, $w_k$ and $v_k$ are mutually independent, 
and with known covariance matrices $Q_k \in \mB{R}^{n\times n}$ and 
$R_k \in \mB{R}^{m \times m}$, respectively. 
Note that here we assume all the measurement vectors
have consistent dimension $m$. There is no loss of generality compared to the 
standard model where the dimensions depend on $k$, since 
any measurement vector can be augmented to a standard size $m$, and then 
the phantom measurements can be disabled using the modeling interface
(by setting corresponding columns and rows of $R_{k}^{-1}$ to $0$.) 

We now briefly explain how to use Student's t error modeling to 
design smoothers with two important characteristics. 
In order to obtain smoothers that are robust to heavily contaminated data, 
the vector $v_k \in \mB{R}^{m(k)}$
can be modeled zero-mean Student's t measurement noise \eqref{StudentDensity}
of known covariance $R_k \in \mB{R}^{m(k) \times m(k)}$ and degrees of freedom $s$.
To design smoothers that can track sudden changes in the state, 
the process residuals $w_k$ are modeled using Student's t noise. 
These features may be employed separately or in tandem, 
and we always assume that the vectors $ \{ w_k \} \cup \{ v_k \} $
are all mutually independent.

In the next section, we design a smoother that finds the MAP estimates of $\{x_k\}$
for a general  formulation, where Student's t or least squares modeling 
can be used for any or all process and measurement residuals. 
We then specialize it to recover the applications discussed above. 

\section{Generalized Smoothing Framework}
\label{GenT}
Given a sequence of column vectors $\{ u_k \}$
and matrices $ \{ T_k \}$ we use the notation
\[
\R{vec} ( \{ u_k \} )
=
\begin{bmatrix}
u_1 \\ u_2  \\ \vdots \\ u_N
\end{bmatrix}
\; , \;
\R{diag} ( \{ T_k \} )
=
\begin{bmatrix}
T_1    & 0      & \cdots & 0 \\
0      & T_2    & \ddots & \vdots \\
\vdots & \ddots & \ddots & 0 \\
0      & \cdots & 0      & T_N
\end{bmatrix} .
\]
We also make the following definitions:
\[
\begin{array}{rcl}
R       & = & \R{diag} ( \{ R_k \} )
\\
Q       & = & \R{diag} ( \{ Q_k \} )
\\
x       & = & \R{vec} ( \{ x_k \} )
\end{array}
\; , \;
\begin{array}{rcl}
w( x )    & = & \R{vec} ( \{ x_k - g_k ( x_{k-1} ) \} )
\\
v ( x )  & = & \R{vec} ( \{ z_k - h_k( x_k ) \} ) .
\end{array}
\]
In the most general case, we suppose that 
any of the components $w_k^i$
or $v_k^i $ components can be modeled 
either using Gaussian or Student's t distributions. 

For the sake of modeling clarity, 
assume that {\it subcomponents} of measurement and innovation
residuals are consistently modeled across time points $k$; 
this gives the user the ability to select which {\it subvectors} 
of process and measurement residuals to model using Student's t, 
but not to assign different penalties to different time points. 

Denote by $w_k^G$ and $w_k^S$ the subvectors of the innovation 
residuals $w_k$, and denote by $v_k^G$ and $v_k^S$ the subvectors of the measurement residuals $v_k$ 
that are to be modeled using the Gaussian and Student's t distributions, 
respectively. Assume that all of these subvectors are mutually 
independent, and denote the corresponding covariance submatrices 
by $Q_k^G$, $Q_k^S$, $R_k^G$, and $R_k^S$. 
Maximizing the likelihood for this model 
is equivalent to minimizing the associated negative log likelihood
\[
-\ln \B{p}(\{\nu_k^G\},\{\nu_k^S\}, \{w_k^G\},\{w_k^S\}),
\]
which can be explicitly written as follows: 
\begin{equation}
\label{fullObjectiveGeneral}
\begin{aligned}
&\sum^N_{k=1}s\ln
\left[1 + \frac{\|v_k^S\|_{(R_{k}^S)^{-1}}^2}{s}\right]
+
\|v_k^G\|_{(R_k^{G})^{-1}}^2 
+
r\ln
\left[1 + \frac{\|w_k^S\|_{(Q_{k}^S)^{-1}}^2}{r}\right]
+
\|w_k^G\|_{(Q_k^G)^{-1}}^2 
\end{aligned}
\end{equation}
where $s$ and $r$ are degree of freedom parameters
corresponding to $v_k^S$ and 
$w_k^S$.

A first-order accurate
affine approximation to our model
with respect to direction
$d = \R{vec}\{d_k\}$
near a fixed state sequence $x$ is given by
\[
\begin{array}{lll}
\tilde w( x;d )
& = &
\R{vec} ( \{ x_k - g_k(x_{k-1}) - g_k^{(1)} (x_{k-1}) d_k \} ),
\\
\tilde v ( x;d  )
& = &
\R{vec} ( \{ z_k -  h_k(x_k) - h_k^{(1)} (x_k) d_k \} ).
\end{array}
\]
Set $Q_{N+1} = I_n$ and $g_{N+1} ( x_N ) = 0$
(where $I_n$ is the $n \times n$ identity matrix)
so that the formulas are also valid for $k = N+1$.

We minimize the nonlinear nonconvex objective in (\ref{fullObjectiveGeneral})
by iteratively solving quadratic programming (QP) subproblems of the form:
\begin{equation}
\label{QuadraticSubproblemOne}
\begin{array}{lll}
\mbox{min}
    &\frac{1}{2}d^\R{T}Cd + a^\R{T}d
    \quad \mbox{w.r.t} \;  d \in \mB{R}^{nN} ,
\end{array}
\end{equation}
where $a$ is the gradient of objective~\eqref{fullObjectiveRobust} with respect
to $x$ and $C$ has the form
\begin{equation}
\label{hessianApprox}
C
=
\begin{bmatrix}
C_1 + H_1 & A_2^\R{T} & 0 & \\
A_2 & C_2 + H_2 & A_3^\R{T} & 0 \\
0 & \ddots & \ddots& \ddots & \\
& 0 & A_N & C_N + H_N
\end{bmatrix} ,
\end{equation}
Note that this matrix is symmetric block tridiagonal. This structure is essential
to the computational results for a wide variety of Kalman filtering and smoothing 
algorithms; it was noted early on in~\cite{Wright1990,Fahr1991}.

In order to fully describe $C_k$ and $A_k$, first let 
$\Sc{W}^G$, $\Sc{W}^S$ denote the indices
associated to all subvectors $w_k^G$ and $w_k^S$
within $w_k$. 
For example, if the Student's t density is used for all 
measurement residuals, and the Gaussian penalty
is used for all process residuals, then 
$\Sc{W}^G = \{1, \dots, n\}$, $\Sc{W}^S = \emptyset$. 

Now define with $A_k, C_k, H_k \in \mB{R}^{n\times n}$ as follows:
\begin{eqnarray}
\nonumber
&A_k(\Sc{W}^S, \Sc{W}^S) 
\label{Ak}
&=
 -\frac{r (Q_k^S)^{-1}(g^{(1)}_k)^S }{r + \|w_k^S\|_{(Q_k^S)^{-1}}^2}\\
&A_k(\Sc{W}^G, \Sc{W}^G) 
 &= 
-(Q_k^G)^{-1}(g^{(1)}_{k})^G\\
\nonumber
&C_k(\Sc{W}^G, \Sc{W}^G) &= ((g^{(1)}_{k+1})^G)^\R{T}(Q^G_{k+1})^{-1}(g^{(1)}_{k+1})^G +  (Q_k^G)^{-1}\\
&C_k(\Sc{W}^S, \Sc{W}^S) 
\label{Ck}
&= 
\frac{r((g^{(1)}_{k+1})^S)^\R{T}(Q^S_{k+1})^{-1}(g^{(1)}_{k+1})^S}
{{r + \|w_{k+1}^S\|_{(Q_{k+1}^S)^{-1}}^2}}
+  \frac{r (Q_k^S)^{-1}}{r + \|w_k^S\|_{(Q_k^S)^{-1}}^2} \\
\nonumber
&H_k
&=
\label{Hk}
\frac{s((h_k^{(1)})^S)^\R{T}(R_k^S)^{-1}(h_k^{(1)})^S}
{(s + \|v_k^S\|_{(R_k^S)^{-1}}^2)} 
+ ((h_k^{(1)})^G)^\R{T}(R_k^G)^{-1}(h_k^{(1)})^G.
\end{eqnarray}
The entries of $A_k$ and $C_k$ not explicitly defined in~\eqref{Ak} and~\eqref{Ck}
are set to $0$. 

The Hessian approximation terms $H_k$ in~\eqref{Hk} are motivated in Section \ref{Algorithm},
and are crucial to both practical performance and theoretical convergence analysis.
The solutions to the subproblem \eqref{QuadraticSubproblemOne} have the form
$d = -C^{-1}a$,
and can be found in an efficient
and numerically stable manner in $O(n^3N)$ steps,
since $C$ is tridiagonal and positive definite (see \cite{Bell2008}).

\section{Special cases}
\label{SpecialCases}
We know show how the general framework of the previous section can be 
specialized to obtain three smoothers. 
The first two are T-Robust and T-Trend, which are presented in~\cite{SYSID2012tks}.
The third is a new smoother where {\it all} residuals and innovations are modeled
using Student's t.  

The objective corresponding to T-Robust is obtained from~\eqref{fullObjectiveGeneral} by taking $w_k^G = w_k$, 
$w_k^S = 0$, $v_k^G = 0$, $v_k^S = v_k$: 
\begin{eqnarray}
\label{fullObjectiveRobust}
\frac{1}{2}\sum^N_{k=1}s\ln
\left[1 + \frac{\|v_k\|_{R_{k}^{-1}}^2}{s}\right]
+
\|w_k\|_{Q_k^{-1}}^2.
\end{eqnarray}
The terms $A_k, C_k, H_k$ in~\eqref{Ak}---\eqref{Hk} become
\begin{eqnarray}
\nonumber
A_k
&=&
-Q_k^{-1}g^{(1)}_{k}\; , \;\\
\nonumber
C_k
&=&
Q_k^{-1} + (g^{(1)}_{k+1})^\R{T}Q^{-1}_{k+1}g^{(1)}_{k+1}\; ,\\
\label{TrobH}
H_k
&=&
\frac{s(h_k^{(1)})^\R{T}R_k^{-1}h_k^{(1)}}
{(s + \|v_k\|_{R^{-1}_k}^2)}\; .
\end{eqnarray}
%
%
%

The objective corresponding to T-Trend is obtained from~\eqref{fullObjectiveGeneral}
by taking  $w_k^G = 0$, 
$w_k^S = w_k$, $v_k^G = v_k$, $v_k^S = 0$:
\begin{equation}
\label{fullObjectiveTrend}
\frac{1}{2}\sum^N_{k=1}r
\ln \left[1 + \frac{\|w_k\|_{Q_{k}^{-1}}^2}{r}\right]
+
\|v_k\|_{R_k^{-1}}^2.
\end{equation}
The terms $A_k, C_k, H_k$ in~\eqref{Ak}---\eqref{Hk} become
\begin{eqnarray}
\nonumber
A_k
&=&
-\frac{rQ_k^{-1}g^{(1)}_{k}}{r + \|w_{k}\|_{Q^{-1}_k}^2},\\
\nonumber
C_k
&=&
\frac{rQ_k^{-1}}{r + \|w_k\|_{Q^{-1}_k}^2} +
\frac{r(g^{(1)}_{k+1})^\R{T}Q^{-1}_{k+1}g^{(1)}_{k+1}}
{r + \|w_{k+1}\|_{Q^{-1}_{k+1}}^2},\\
\label{TrendH}
H_k
&=&
 (h_k^{(1)})^\R{T}R_k^{-1}h_k^{(1)}.
\end{eqnarray}
%
%
%

Finally, we can apply Student's t to all process and measurement
residuals by taking $w_k^G = 0$, 
$w_k^S = w_k$, $v_k^G = 0$, $v_k^S = v_k$ to obtain 
\begin{equation}
\label{fullObjectiveStudent}
\frac{1}{2}
\sum^N_{k=1}r_k
\ln \left[1 + \frac{\|w_k\|_{Q_{k}^{-1}}^2}{r_k}\right]
+
s_k\ln
\left[1 + \frac{\|v_k\|_{R_{k}^{-1}}^2}{s_k}\right]
\end{equation}
The terms $A_k, C_k, H_k$ in~\eqref{Ak}---\eqref{Hk} become
\begin{eqnarray}
\nonumber
A_k
&=&
-\frac{rQ_k^{-1}g^{(1)}_{k}}{r + \|w_{k}\|_{Q^{-1}_k}^2},\\
\nonumber
C_k
&=&
\frac{rQ_k^{-1}}{r + \|w_k\|_{Q^{-1}_k}^2} +
\frac{r(g^{(1)}_{k+1})^\R{T}Q^{-1}_{k+1}g^{(1)}_{k+1}}
{r + \|w_{k+1}\|_{Q^{-1}_{k+1}}^2},\\
\label{FullH}
H_k
&=&
\frac{s(h_k^{(1)})^\R{T}R_k^{-1}h_k^{(1)}}
{(s + \|v_k\|_{R^{-1}_k}^2)}\; .
\end{eqnarray}

\section{Algorithm and Global Convergence}
\label{Algorithm}

When models $g_k$ and $h_k$ are linear,
we can compare the algorithmic scheme proposed
in the previous sections with the method
in \cite{Fahr1998}.
The latter uses the random information matrix (random Hessian)
in place of the matrix $C$ defined above, and recommends
using the expected (Fisher) information when the full Hessian
is indefinite.
When the densities for $w_k$ and $v_k$
are Gaussian, this is equivalent to using Newton's
method when possible, and using Gauss-Newton when the Hessian is indefinite.
In general, using the expected information is known as the method
of Fisher's scoring.
In the Student's t-case, the scalar Fisher information matrix is computed
in~\cite{Lange1989} to be
\begin{equation}
\label{ExpectedFisher}
\frac{s+1}{s+3}\sigma^{-2}\;,
\end{equation}
where $\sigma^2$ is the variance and $s$ is the degrees of freedom.
The authors of~\cite{Fahr1998} proposed using~\eqref{ExpectedFisher} as the Hessian approximation
when the full Hessian is indefinite.
Implementing this approach would effectively replace
the terms $\|w_k\|_2^2$ or $\|v_k\|_2^2$, present
in the denominators of $H_k$ and $A_k$ (see \ref{TrobH} and \ref{TrendH}),
with terms that depend only on $s_k$ and $r_k$, the degrees of freedom.
So while the random information (Hessian)
matrix can become indefinite,
the Fisher information is insensitive
to outliers, and fails to down-weigh their contributions to the Hessian approximation. 

To overcome these drawbacks, and find a middle ground between using
the full Hessian and using a very rough approximation, we propose
a Gauss-Newton method
that is able to incorporate the relative size
information of the residuals into the Hessian approximation.
In the rest of this section
we provide the details for the application of this method
and a proof of convergence.

As in~\cite{AravkinL12011}, the convergence theory
is based upon the versatile convex-composite techniques developed in~\cite{Burke85}.
We begin by choosing the convex-composite structure for objective~\eqref{fullObjectiveGeneral}.
We write it in the convex-composite form \(K= \rho \circ F\),
with smooth $F$ and convex $\rho$:
\begin{eqnarray}
\rho \left( \begin{array}{c} c \\ u \end{array} \right)
\label{rhoDef}
&=&
c
+
\frac{1}{2}\|u\|_{B^{-1}}^2
+
\delta_{\mathbb{R}_+}(c)
\\
F(x)
\label{FDef}
&=&
\left( \begin{array}{c}  f(x) \\ \begin{bmatrix} w^G(x) \\ v^G(x) \end{bmatrix} \end{array} \right)
\\
f(x)
\label{fDef}
&=&
 \frac{1}{2}\sum^N_{k=1}s\ln
\left[1 + \frac{\|v_k^S\|_{(R_{k}^S)^{-1}}^2}{s}\right]
+
\sum^N_{k=1}r\ln
\left[1 + \frac{\|w_k^S\|_{(Q_{k}^S)^{-1}}^2}{r}\right]\;.
\end{eqnarray}

Note that the range of $f$ is $\B{R_+}$, and $\rho$
is coercive on its domain. 
The terms indexed with superscript $S$ in~\eqref{Ck} and~\eqref{Hk} 
combine to form a positive definite approximation to the Hessian of $f$. 
To see this, consider the scalar function
\[
\kappa(x):=\frac{1}{2}\ln(1 + x^2/r)\;.
\]
The second derivative of this function in $x$ is given by
\begin{equation}
\label{LogHessian}
\frac{(r + x^2) - 2x^2}{(r + x^2)^2} = \frac{r-x^2}{(r+x^2)^2}
\end{equation}
and is only positive on $(-\sqrt{r}, \sqrt{r})$.
There are two reasonable globally positive approximations to take.
The first,
\[
\frac{r}{(r + x^2)^2},
\]
simply ignores the subtracted term $-x^2$. In practice, we found this
approximation to be too aggressive. Instead, we drop the
$2x^2$ from the left of~\eqref{LogHessian} to obtain the approximation
\begin{equation}
\label{LogHessianApprox}
\frac{(r + x^2)}{(r + x^2)^2} = \frac{1}{(r+x^2)}\;.
\end{equation}
Similarly, the terms indexed by superscript $S$ in~\eqref{Ck} and~\eqref{Hk} 
provide globally positive definite approximations to the Hessian of $f$,
using the strategy in~\eqref{LogHessianApprox}.
This strategy offers a significant computational advantage---the Hessian approximation that is built up
down-weights the contributions of outliers, helping the algorithm proceed faster to the solution.
As we shall see, these terms are also essential for the general convergence theory.

Our approach exploits the objective structure by iteratively
linearizing $F$ about the iterates $x^k$ and solving the
{\it direction finding subproblem}
\begin{equation}
\label{DirectionFindingSubproblem}
\begin{array}{lll}
&\displaystyle \min_{d\in \mB{R}^{nN}}&
\rho(F(x^k) + F^{(1)}(x^k)d)  + \frac{1}{2}d^TU(x^k)d,
\end{array}
\end{equation}
where $U(x^k)$ is a symmetric positive semidefinite matrix that depends continuously on $x^k$.
For any smoother in the framework of
section~\ref{GenT}, 
problem~\eqref{DirectionFindingSubproblem}
can be solved with a single block-tridiagonal solve of the
system~\eqref{QuadraticSubproblemOne},
yielding descent directions $d$ for the objective $K(x)$. 

We now develop a general convergence theory for convex-composite methods
to establish the overall convergence to a stationary point of $K(x)$.
This theory is in the spirit of~\cite{AravkinL12011} and~\cite{Burke85}, and allows
the inclusion of the quadratic term $\frac{1}{2}d^TU(x^k)d$ in~\eqref{DirectionFindingSubproblem}.
This term was not necessary in~\cite{AravkinL12011}, but is crucial here.
Note that the theory does not rely at all on the technique used to solve
the direction finding subproblem, and so the theory in this paper applies
to the algorithm in~\cite{AravkinL12011} by taking $U = 0$. 

Recall from~\cite{Burke85} that the first-order necessary condition for optimality
in the convex composite problem involving $K(x)$ is
\[
0 \in \partial K(x) = \partial \rho \left( F(x) \right) F^{(1)} (x)
\]
where \(\partial K (x)\) is the
generalized subdifferential of \(K\) at \(x\) \cite{RTRW}
and \(\partial \rho \left( F(x) \right) \) is the convex subdifferential
of \( \rho \) at \(F(x)\) \cite{Rock70}.
Elementary convex analysis gives us the equivalence
\[
0 \in \partial K(x)
\quad \Leftrightarrow \quad
K(x) = \inf_d \rho \left( \; F(x) + F^{(1)} (x) d \; \right) \; .
\]

For the general smoothing class of interest, 
it is desirable to modify this objective by including curvature information,
yielding the problem \eqref{DirectionFindingSubproblem}.
We define the difference function
\begin{equation}
\label{extendedDelta}
\Delta( x; d) = \rho \left( \; F(x) + F^{(1)} (x) d \; \right)
+
\frac{1}{2}d^\R{T}U(x)d
 - K(x) \; ,
\end{equation}
where $U(x)$ is positive semidefinite and varies continuously
with $x$. Note that $\Delta(x;d)$ is a convex function of $d$
that is bounded below, hence the optimal value 
\begin{equation}
\label{extendedInf}
\Delta^* ( x)      = \inf_d \Delta (x ; \; d) \; 
\end{equation}
is well defined regardless of the existence of a solution. 
If \(\Delta^* ( x) = 0\), then 
 \(\displaystyle0 \in \arg\min_d \Delta(x;\;d) \).
Hence, by~\cite[Theorem 3.6]{Burke85},  \(\Delta^* ( x) = 0\) if and only if 
$0 \in \partial K(x)$.

Given \(\eta \in (0,1)\), we define a set of search directions at \(x\) by
\begin{equation}
\label{extendedDSet}
D( x, \eta ) = \set{d}{ \Delta(x ; d) \le \eta \Delta^* ( x) }  \; .
\end{equation}
Note that if there is a \(d \in D( x, \eta)\) such that
\(\Delta( x ; \; d) \ge - \eta \varepsilon \), then
\( \Delta^* ( x) \ge - \varepsilon\).
These ideas motivate the following
algorithm.

\begin{algor}
\label{GaussNewtonAlgorithm}
{\it Gauss-Newton Algorithm.}

The inputs to this algorithm are
\begin{itemize}
\item \( x^0 \in \mB{R}^{Nn} \): initial estimate of state sequence
\item \(\varepsilon \ge 0 \): overall termination criterion
\item \( \eta \in (0,1) \): search direction selection parameter
\item \( \beta \in (0, 1) \): step size selection parameter
\item \( \gamma \in (0,1) \): line search step size factor
\end{itemize}

The steps are as follows:

\begin{enumerate}
\item
Set the iteration counter \( \nu = 0 \).
\item (Gauss-Newton Step)
\label{GaussNewtonStep}
Find \(d^\nu \) in the set \( D(x^\nu, \eta ) \)
in \ref{extendedDSet}.
Set \(\Delta_\nu = \Delta( x^\nu ; d^\nu) \)
in \ref{extendedDelta} and
{\it Terminate} if \( \Delta_\nu \ge - \varepsilon \).
\item (Line Search) Set
\[
\begin{array}{lll}
t_\nu &=& \max  \gamma^i \\
     &\text{s.t.}&i \in \{ 0, 1, 2, \cdots \} \; \mbox{ and }
    \\
    &\text{s.t.}&\rho \left( F( x^\nu + \gamma^i d^\nu ) \right)
        \le \rho \left( F( x^\nu ) \right)
                     + \beta \gamma^i \Delta_\nu.
\end{array}
\]
\item (Iterate)
Set \(x^{\nu+1} = x^\nu + t_\nu d^\nu \) and return to Step~\ref{GaussNewtonStep}.
\end{enumerate}
\end{algor}
We now present a general global convergence theorem that covers 
any smoother in section~\ref{GenT}.
This theorem also generalizes~\cite[Theorem 5.1]{AravkinL12011} to include
positive semidefinite curvature terms in the Gauss-Newton framework.

\begin{theorem}
\label{GlobalConvergenceTheorem}
Define  
\begin{equation}
\label{LevelSetDef}
\Lambda := \{u |  \rho (u)\le K(x^0)\}
\end{equation}
and suppose that there exists a $\tau > 0$ such that
$F^{(1)}$ is bounded and uniformly continuous on the set  
\begin{equation}
\label{snot}
S_0 := \overline{\text{co}}\left(F^{-1}(\Lambda)\right) + \tau\mathbb{B}\;.
\end{equation}
If \(\{x^\nu\}\) is a sequence generated by the
Gauss-Newton Algorithm~\ref{GaussNewtonAlgorithm}
with initial point \(x^0\) and \(\varepsilon =0\), then one of the
following must occur:
\begin{enumerate}
\item[(i)]
The algorithm terminates finitely at a point \(x^\nu\) with
\(0\in\partial K(x^\nu)\).
\item[(ii)]
The sequence \(\| d^\nu \|\) diverges to \(+\infty\).
\item[(iii)]
\(\lim_{\nu\in I}\Delta_\nu = \lim_{\nu \in I} \Delta^*(x^\nu)=0\) for every subsequence  \(I\) for which
the set \( \set{d^\nu}{\nu \in I} \) is bounded.
\end{enumerate}

Moreover, if $\bar x$ is any cluster point
of a subsequence $I \subset \B{Z_+}$
such that the subsequence \(\{d^{\nu}|\nu \in I\}\)
is bounded, then $0 \in \partial K(\bar x)$.
\end{theorem}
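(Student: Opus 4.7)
The plan is to adapt the convex-composite Gauss-Newton convergence argument of Burke~\cite{Burke85} (used already in~\cite{AravkinL12011}) to the present setting, where the new ingredient is the positive-semidefinite quadratic term $\tfrac12 d^{\R T}U(x)d$ inside the subproblem~\eqref{DirectionFindingSubproblem}. I will start by recording two structural facts that drive everything else. First, since $d=0$ is feasible in~\eqref{extendedInf} and $U$ is PSD, $\Delta^*(x)\le \Delta(x;0)=0$ for every $x$. Second, by the characterization of stationarity noted after \eqref{extendedInf}, $\Delta^*(x)=0$ iff $0\in\partial K(x)$. These together immediately handle case~(i): if the algorithm terminates with $\Delta_\nu\ge -\varepsilon = 0$, then $\eta\Delta^*(x^\nu)\ge\Delta_\nu\ge 0$ forces $\Delta^*(x^\nu)=0$, so $0\in\partial K(x^\nu)$.

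Next, assume the algorithm does not terminate and that case~(ii) fails, i.e.\ some subsequence $I$ has $\{d^\nu\}_{\nu\in I}$ bounded. The core step is the line-search analysis. Using convexity of $\rho$ on the segment $[F(x^\nu),F(x^\nu)+F^{(1)}(x^\nu)d^\nu]$,
\[
\rho\bigl(F(x^\nu)+tF^{(1)}(x^\nu)d^\nu\bigr) \le K(x^\nu)+ t\bigl[\rho(F(x^\nu)+F^{(1)}(x^\nu)d^\nu)-K(x^\nu)\bigr],
\]
and since $U(x^\nu)\succeq 0$ the bracket is bounded above by $\Delta_\nu$. Combining this with the first-order expansion $F(x^\nu+td^\nu)=F(x^\nu)+tF^{(1)}(x^\nu)d^\nu+o(t\|d^\nu\|)$, whose remainder is controlled uniformly on the enlarged sublevel set $S_0$ in~\eqref{snot} thanks to the assumed uniform continuity and boundedness of $F^{(1)}$, yields
\[
\rho\bigl(F(x^\nu+td^\nu)\bigr)-K(x^\nu)\le t\Delta_\nu + t\|d^\nu\|\,\omega(t\|d^\nu\|),
\]
for a modulus $\omega\to 0$. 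For bounded $d^\nu$ this forces the Armijo condition to be satisfied at some $t_\nu$ bounded away from $0$ whenever $\Delta_\nu$ is bounded away from $0$, so the line search terminates finitely and iterates remain in $F^{-1}(\Lambda)\subset S_0$.

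The monotone decrease $K(x^{\nu+1})\le K(x^\nu)+\beta t_\nu\Delta_\nu$ together with the lower boundedness of $K$ (which is bounded below by $0$ on $\Lambda$ because $\rho\ge 0$ on its domain) gives $\sum_\nu t_\nu|\Delta_\nu|<\infty$. Now suppose toward contradiction that along a bounded sub-subsequence $I'\subset I$ we have $\Delta_\nu\le -\delta<0$. Then $t_\nu\to 0$ on $I'$, so for large $\nu\in I'$ the step $t_\nu/\gamma$ fails the Armijo test; plugging this failure into the expansion above and letting $\nu\to\infty$ along $I'$ (using boundedness of $d^\nu$ and uniform continuity of $F^{(1)}$ on $S_0$) contradicts $\Delta_\nu\le -\delta$. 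Hence $\Delta_\nu\to 0$ on $I$, and because $\Delta_\nu\le\eta\Delta^*(x^\nu)\le 0$ with $\eta\in(0,1)$, also $\Delta^*(x^\nu)\to 0$, which is case~(iii).

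For the final assertion, let $\bar x$ be a cluster point of $\{x^\nu:\nu\in I\}$. The map $x\mapsto \Delta^*(x)$ is upper semicontinuous: for any fixed $d$, $\Delta(\,\cdot\,;d)$ is continuous (because $F$, $F^{(1)}$ and $U$ are continuous and $\rho$ is continuous on its domain, with iterates staying in $S_0$), so $\Delta^*=\inf_d\Delta(\,\cdot\,;d)$ is an infimum of continuous functions, hence upper semicontinuous. Combined with $\Delta^*\le 0$ and $\Delta^*(x^\nu)\to 0$ along the cluster subsequence, this gives $\Delta^*(\bar x)=0$ and therefore $0\in\partial K(\bar x)$. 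The main technical obstacle is the line-search step: one must carry the PSD curvature term $\tfrac12 d^{\R T}U(x)d$ through the convexity/Lipschitz estimate in a way that still yields a uniform lower bound on $t_\nu$ when $\Delta_\nu$ is bounded away from zero; this is exactly where the PSD (rather than merely bounded) assumption on $U(x)$ is used, and where the argument generalizes the $U\equiv 0$ case of~\cite{AravkinL12011}.
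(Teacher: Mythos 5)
Your proposal is correct and follows essentially the same route as the paper's proof: the PSD term $\tfrac12 d^{\R T}U(x)d$ is absorbed via $\rho(F+F^{(1)}d^\nu)-K(x^\nu)\le\Delta_\nu$, the linearization error is controlled by the Lipschitz constant of $\rho$ and the modulus of continuity of $F^{(1)}$ on $S_0$, the contradiction comes from the failed Armijo step at $t_\nu/\gamma$, and the transfer to $\Delta^*$ uses $\Delta_\nu\le\eta\Delta^*(x^\nu)\le 0$. Your upper-semicontinuity argument for the cluster-point claim is just a repackaging of the paper's step of passing to the limit in $\Delta_\nu\le\eta\Delta(x^\nu;d)$ for each fixed $d$, so no substantive difference there either.
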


\begin{proof}
We will assume that none of (i), (ii), (iii) occur and establish a contradiction. 
Then there is a subsequence $I$ such that 
\[
\sup_{\nu \in I} \|d_\nu\| < \infty 
\quad \text{and}\quad 
\sup_{\nu \in I} \Delta_\nu \leq  \zeta< 0\;.
\] 
Since $K(x^\nu)$ is a decreasing sequence that is bounded below by $0$, 
we know that the differences $K(x^{\nu +1}) - K(x^\nu) \rightarrow 0$.
Therefore, by Step 3) of Algorithm~\ref{GaussNewtonAlgorithm}, 
$\zeta t_\nu \Delta_\nu \rightarrow 0$, which implies that $t_{\nu \in I} \rightarrow 0$. 
Without loss of generality we may assume that $t_\nu \leq 1$ and $t_\nu \|d_\nu\| \leq \gamma\tau$
for all $\nu \in I$. Hence for all $\nu \in I$, 
\[
\begin{aligned}
\|F(x^\nu +  t_\nu \gamma^{-1} d^\nu )  - F(x^\nu) \|
& \leq 
t_\nu \gamma^{-1} \int_0^1 \left\| F'(x^\nu + s t_\nu \gamma^{-1} d^\nu) \right\|\|d^\nu\| ds  \\
&\leq \tau M \;, 
\end{aligned}
\]
where $M$ is a bound on $F'$ over $S_0$. Let $K$ be a Lipschitz constant for $\rho$ over the 
compact set $\Lambda + \tau M\mathbb{B}$. Again by Step 3) of Algorithm~\ref{GaussNewtonAlgorithm}, 
for all $\nu \in I$,  
\[
\begin{aligned}
\beta \gamma^{-1} t_\nu \Delta_\nu 
&\leq 
\rho(F(x^\nu + t_\nu \gamma^{-1} d^\nu )) - \rho(F(x^\nu)) \\
& \leq 
t_\nu \gamma^{-1} \Delta_\nu + K \| F(x^\nu + t_\nu \gamma^{-1} d^\nu ) - F(x^\nu) - t\nu\gamma^{-1}F^{(1)}(x^\nu) d^\nu\| \\
&= 
t_\nu \gamma^{-1} \Delta_\nu 
+ 
t_\nu \gamma^{-1} K \left\| \int_0^1 
\left( F^{(1)}(x^\nu + st_\nu\gamma^{-1} d_\nu) -  F^{(1)}(x^\nu) \right) d^\nu ds \right\|\\
& \leq 
t_\nu \gamma^{-1} \left( \Delta_\nu + K \omega(t_\nu \gamma^{-1}\|d_\nu\|) \|d_\nu\| \right) \;,
\end{aligned}
\]
where $\omega$ is the modulus of continuity of $F'$ on $S_0$.
Rearranging, we obtain 
\[
0 \leq (1-\beta) \Delta_\nu  +  K \omega(t_\nu \gamma^{-1}\|d_\nu\|) \|d_\nu\|\;. 
\]
Taking the limit for $\nu \in I$, we obtain the contradiction $0 \leq (1-\beta)\zeta$.
Hence, $\lim_{\nu \in I} \Delta_\nu = 0$, which implies that $\lim_{\nu \in I} \Delta^*(x^\nu) = 0$, 
since $\Delta_\nu \leq \eta\Delta^*(x^\nu)\leq 0$.
%

Finally, suppose that \(\bar x\)
is a cluster point of a sequence \(I \subset \B{Z_+}\)
for which \(\{d^\nu \}\) is bounded.
Without loss of generality, there exists a $\bar d$ 
such that $(x^\nu, d^\nu)_{\nu \in I} \rightarrow (\bar x, \bar d)$.
%
For all  \(d \in \mB{R}^{Nn}\),
\[
\begin{array}{lll}
\Delta_\nu
&=&
\rho\left(F(x^\nu)
+
F^{(1)}(x^\nu)d^\nu\right)
+
\frac{1}{2}\|d^\nu\|_{U^{\nu}}^2
-
\rho\left(F(x^\nu)\right)\\
&\leq&
\eta \Delta^*(x^\nu) \\
&\leq&
\eta\left(\rho\left(F(x^\nu)
+
F^{(1)}(x^\nu) d \right)
+
\frac{1}{2}\| d\|_{U^{\nu}}^2
-
\rho\left(F(x^\nu)\right)\right)\;, 
\end{array}
\]
where $U^\nu = U(x^\nu)$.
Taking the limit over $J$ gives
\[
\begin{array}{lll}
0
& = &
\rho\left(F(\bar x)
+
F^{(1)}(\bar x)\bar d\right)
+
\frac{1}{2}\|\bar d\|_{\bar U}^2
-
\rho\left(F(\bar x)\right)\\
&\leq&
\eta\left(\rho\left(F(\bar x)
+
F^{(1)}(\bar x) d \right)
+
\frac{1}{2}\| d\|_{\bar U}^2
-
\rho\left(F(\bar x)\right)\right)\;,
\end{array}
\]
where $\overline{U} = U(\bar x)$. 
Since $d$ was chosen arbitrarily, it must be the case that $\Delta^*(\bar x) = 0$, 
which implies that $0 \in \partial K$ by~\cite[Theorem 3.6]{Burke85}.
\end{proof}

A stronger convergence
result is possible under stronger assumptions on \(F\) and \(F^{(1)}\).

\begin{corollary}
\label{GlobalConvergenceCorollary}
Suppose that \(F^{-1}(\Lambda) = \set{x}{F(x)\in\Lambda}\)
is bounded, and there exists $0 < \lambda_{\min}$ such that 
\begin{align}
\label{Assumption}
\forall \; x \in F^{-1}(\Lambda), \quad
0 < \lambda_{\min} \|d\|^2 \leq d^TU(x)d 
\quad \forall d \in \mathrm{Null}(F^{(1)}(x))\;.
\end{align}
If \(\{ x^\nu \}\) is a sequence generated by Algorithm~\ref{GaussNewtonAlgorithm}
with initial point \(x^0\) and \(\varepsilon = 0\), then \(\{ x^\nu \}\) and
\(\{d^\nu\}\) are bounded and either
the algorithm terminates finitely at a point \( x^\nu \) with
\( 0 \in \partial K( x^\nu ) \), or \( \Delta_\nu \rightarrow 0 \)
as \( \nu \rightarrow \infty \),
and every cluster point
\(\bar{x}\) of the sequence \(\{x^\nu\}\)
satisfies \(0 \in \partial K( \bar{x} )\).
\end{corollary}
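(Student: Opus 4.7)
The plan is to verify that the additional hypotheses of Corollary~\ref{GlobalConvergenceCorollary} rule out case (ii) of Theorem~\ref{GlobalConvergenceTheorem}, leaving only the desired conclusions from cases (i) and (iii). First I note that the regularity hypothesis on $F^{(1)}$ required by that theorem is automatic here: boundedness of $F^{-1}(\Lambda)$ makes its closed convex hull compact, so $S_0$ in~\eqref{snot} is bounded, and continuity of $F^{(1)}$ (from smoothness of $g_k$ and $h_k$) makes it bounded and uniformly continuous on the compact closure of $S_0$. Second, the line search guarantees $\rho(F(x^\nu))$ is nonincreasing, so $F(x^\nu) \in \Lambda$ and $\{x^\nu\} \subset F^{-1}(\Lambda)$ is bounded. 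The remaining task, and the heart of the proof, is to bound $\{d^\nu\}$.

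I would argue this by contradiction: suppose $\|d^\nu\| \to \infty$ along some subsequence $I$. Since $\rho$ is lower semicontinuous, $\Lambda$ and hence $F^{-1}(\Lambda)$ are closed, so after passing to a further subsequence one may assume $x^\nu \to \bar x \in F^{-1}(\Lambda)$ and $\tilde d^\nu := d^\nu/\|d^\nu\| \to \tilde d$ with $\|\tilde d\| = 1$. Combining the search-direction property~\eqref{extendedDSet} with~\eqref{extendedDelta} yields
\[
\rho\bigl(F(x^\nu) + F^{(1)}(x^\nu) d^\nu\bigr) + \tfrac{1}{2} (d^\nu)^T U(x^\nu) d^\nu \;\le\; \rho(F(x^\nu)) \;\le\; K(x^0),
\]
and both summands on the left are nonnegative --- the first by the form of $\rho$ in~\eqref{rhoDef} together with the indicator constraint forcing the scalar component to be nonnegative, and the second by positive semidefiniteness of $U$.

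The key extraction from this inequality is done by unpacking $\rho$ via~\eqref{rhoDef}: the linear (scalar) and quadratic (Gaussian) parts of $F(x^\nu) + F^{(1)}(x^\nu)d^\nu$ are each bounded by $K(x^0)$, so dividing through by $\|d^\nu\|$ and passing to the limit along $I$ --- using continuity of $\nabla f$, $w^{G(1)}$, $v^{G(1)}$ and boundedness of $\{x^\nu\}$ --- yields $F^{(1)}(\bar x)\tilde d = 0$, i.e.\ $\tilde d \in \text{Null}(F^{(1)}(\bar x))$. Dividing the companion bound $(d^\nu)^T U(x^\nu) d^\nu \le 2 K(x^0)$ by $\|d^\nu\|^2$ and using continuity of $U$ gives $\tilde d^T U(\bar x) \tilde d = 0$. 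Since $\bar x \in F^{-1}(\Lambda)$ and $\|\tilde d\|=1$, this directly contradicts assumption~\eqref{Assumption}.

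With case (ii) ruled out, case (iii) of Theorem~\ref{GlobalConvergenceTheorem} applies to the entire sequence and forces $\Delta_\nu \to 0$; the statement about cluster points then follows immediately from the final assertion of that theorem. The main obstacle is the careful separation of the two sources of coercivity when passing to the limit: $\rho$ controls only those directions that $F^{(1)}(\bar x)$ does not annihilate, while $U$ controls precisely the directions inside that null space, and neither alone suffices to bound $\|d^\nu\|$ --- book-keeping which summand of the master inequality handles which part of $\tilde d$ is what makes the argument go through.
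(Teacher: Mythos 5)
Your proposal is correct and follows essentially the same route as the paper's proof: establish compactness of $F^{-1}(\Lambda)$ and hence the hypotheses of Theorem~\ref{GlobalConvergenceTheorem}, note $\{x^\nu\}\subset F^{-1}(\Lambda)$ is bounded, and then bound $\{d^\nu\}$ by normalizing and extracting a contradiction from~\eqref{Assumption}, with the master inequality $\rho(F(x^\nu)+F^{(1)}(x^\nu)d^\nu)+\tfrac12(d^\nu)^TU^\nu d^\nu\le\rho(F(x^0))$ supplying both that $F^{(1)}(\bar x)\tilde d=0$ (via boundedness of $\Lambda$, which is your ``unpacking of $\rho$'') and that $\tilde d^TU(\bar x)\tilde d=0$. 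The only cosmetic difference is that the paper cites boundedness of $\Lambda$ directly where you decompose $\rho$ into its scalar and quadratic parts; these are the same fact.
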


\begin{proof}
First note that \(F^{-1}(\Lambda)\) is closed since \(F\) is continuous,
and therefore \(F^{-1}(\Lambda)\) is compact, since by assumption it is bounded.
 Hence \(S_0\) (see~\eqref{snot}) is also
 compact. Therefore, \(F^{(1)}\) is uniformly continuous and bounded on
 \(S_0 \) which implies
 that the hypotheses of Theorem \ref{GlobalConvergenceTheorem} are
 satisfied, and so one of (i)-(iii) must hold. If (i) holds we are done, so we
 will assume that the sequence  \(\{x^\nu\}\) is infinite.  Since \(\{ x^\nu \} \subset F^{-1}(\Lambda)\), this sequence is bounded.
 We now show that the sequence \(\{d^\nu\}\) of search directions is also bounded.

 Suppose that \eqref{Assumption} holds. 
 For any direction $d^{\nu}$, note that $d^\nu$ satisfies
\begin{equation}
\label{SolutionBound}
\rho\left(F(x^\nu) + F^{(1)}(x^\nu)d^{\nu}\right)
+ \frac{1}{2}\|d^{\nu}\|_{U^\nu}^2 \leq \rho\left(F(x^\nu)\right)
\leq
\rho(F(x^0))\;.
\end{equation}
%
 Since $\rho\geq 0$, we have
 \begin{equation}
 \label{linearization}
\{F(x^\nu)\} \subset \Lambda \quad \text{and} \quad \{F(x^\nu) + F^{(1)}(x^\nu)d^{\nu}\} \subset \Lambda
 \end{equation}
 and 
 \begin{equation}
 \label{bdDir}
\left\{ 
\frac{1}{2}(d^{\nu})^TU^\nu d^\nu
\right\} 
\leq \rho\left(F(x^0)\right) \quad \forall \nu\;.
\end{equation}
 Suppose that the $\{d^\nu\}$ is unbounded. 
 Then without loss of generality, there exists a subsequence $I$,
 a unit vector $u$, and a vector $\bar x\in F^{-1}(\Lambda)$ 
 such that $\lim_{\nu \in I} d^\nu/\|d^\nu\|\rightarrow u$
 and $\lim_{\nu \in I} x^\nu \rightarrow \bar x$.
 Since $\Lambda$ is bounded,~\eqref{linearization}
 implies that $F^{(1)}(\bar x)u = 0$, so $u \in \mathrm{Nul}(F^{(1)}(\bar x))$,
 and therefore 
 \[
 0 < \lambda_{\min} \leq u^TU(\bar x)u\;.
 \]
 On the other hand, by~\eqref{bdDir}, $\frac{1}{2}\left(\frac{d^\nu}{\|d^\nu\|}\right)^TU^\nu 
 \left(\frac{d^\nu}{\|d^\nu\|}\right)
 \leq \frac{\rho (F(x^0))}{\|d^\nu\|^2}$
 and so in the limit we have the contradiction
 \[
 0 < \lambda_{\min} \leq u^TU(\bar x)u \leq 0\;.
 \]
 Hence $d^\nu$ are bounded. The result now follows from Theorem~\ref{GlobalConvergenceTheorem}.
 
 \end{proof}
We now show that all smoothers of section~\ref{GenT} 
satisfy the required assumptions of Theorem~\ref{GlobalConvergenceTheorem}
and Corollary~\eqref{GlobalConvergenceCorollary}.

\begin{corollary}[Smoother Satisfaction]
\label{SmootherSatisfaction}
Suppose that the process and measurement functions $g_k$ and $h_k$ 
in~\eqref{NonlinearStudentModel} are twice continuously differentiable.  
Then for $F$ given in~\eqref{FDef}, $F^{(1)}$ is bounded and uniformly 
continuous on $S_0$ in~\eqref{snot}. 
Moreover, the hypotheses of Corollary~\ref{GlobalConvergenceCorollary} hold 
if for all $x$ in $F^{-1}(\Lambda)$ and for all $k$, 
there exists $\eta$ such that 
\[
0 < \eta < \sigma_{\min}(G^S(x)), \quad 
G^S(x)
:= 
\begin{bmatrix}
I & 0 & 0 & \\
-(g_k^{(1)}(x))^S & I  & 0 & 0 \\
0 & \ddots & \ddots& \ddots & \\
& 0 & -(g_k^{(N-1)}(x))^S & I
\end{bmatrix} 
\]
\end{corollary}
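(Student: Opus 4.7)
The plan is to verify three items: $F^{(1)}$ is bounded and uniformly continuous on $S_0$, the set $F^{-1}(\Lambda)$ is bounded, and the positive-definiteness hypothesis~\eqref{Assumption} of Corollary~\ref{GlobalConvergenceCorollary} holds. A single compactness argument for $F^{-1}(\Lambda)$ takes care of the first two items at once, after which the algebraic structure of $U(x)$ together with the hypothesis on $\sigma_{\min}(G^S(x))$ delivers the third.

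First I would establish that $F^{-1}(\Lambda)$ is bounded. For $x \in F^{-1}(\Lambda)$, the definition~\eqref{LevelSetDef} and the form of $\rho$ in~\eqref{rhoDef} yield uniform bounds on $f(x)$ and on each Gaussian quadratic $\|w_k^G\|^2_{(Q_k^G)^{-1}}$, $\|v_k^G\|^2_{(R_k^G)^{-1}}$. Inverting the logarithms in~\eqref{fDef} also produces a uniform bound on $\|w_k^S\|$, so every component of the process residual $w_k(x)$ lies in a bounded set. Starting from $x_1 = g_0 + w_1$ and iterating $x_k = g_k(x_{k-1}) + w_k$, the continuity of $g_k$ combined with the bound on $w_k$ gives by induction a uniform bound on each $x_k$, so $F^{-1}(\Lambda)$ is bounded. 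Since $\rho$ is lower semicontinuous and $F$ is continuous, $\Lambda$ and hence $F^{-1}(\Lambda)$ are closed, so $F^{-1}(\Lambda)$ is compact and $S_0$ from~\eqref{snot} is compact. The $C^2$ assumption on $g_k$ and $h_k$ makes $F^{(1)}$ continuous, and a continuous function on a compact set is automatically bounded and uniformly continuous.

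For the curvature hypothesis, I would first re-express $d^\R{T} U(x) d$ in the Gauss-Newton form
\[
\sum_{k=1}^N \frac{r \|(G^S(x) d)_k\|^2_{(Q_k^S)^{-1}}}{r + \|w_k^S\|^2} + \sum_{k=1}^N \frac{s \|(h_k^{(1)}(x))^S d_k\|^2_{(R_k^S)^{-1}}}{s + \|v_k^S\|^2},
\]
with $(G^S(x) d)_k = d_k - (g_k^{(1)}(x))^S d_{k-1}$ and $d_0 = 0$, by matching the block-tridiagonal entries $A_k^S, C_k^S, H_k^S$ in~\eqref{Ak}--\eqref{Hk} against the expansion of the weighted squared norms. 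Because $\|w_k^S\|$ is uniformly bounded on $F^{-1}(\Lambda)$, each coefficient $r/(r + \|w_k^S\|^2)$ is bounded below by a positive constant; combining this with $\sigma_{\min}(G^S(x)) > \eta > 0$ and the positive definiteness of the fixed matrix $(Q_k^S)^{-1}$ produces a uniform bound $d^\R{T} U(x) d \geq \lambda_{\min} \|d\|^2$ valid for all $d$, which is stronger than what is needed on $\mathrm{Null}(F^{(1)}(x))$.

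The main obstacle is the algebraic bookkeeping behind the Gauss-Newton identity above: the off-diagonal cross terms arising from expanding the weighted squared norms must match $A_k^S$ exactly, and the diagonal terms coming from indices $k$ and $k+1$ must combine to reproduce $C_k^S$, including the boundary convention $g_{N+1} \equiv 0$ used in Section~\ref{GenT}. Once this identity is in hand, the remaining estimates are elementary, and compactness of $F^{-1}(\Lambda)$ alone supplies everything needed for both Theorem~\ref{GlobalConvergenceTheorem} and Corollary~\ref{GlobalConvergenceCorollary}.
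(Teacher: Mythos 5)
Your handling of the first two claims is sound and essentially the paper's own argument: the level-set bound on $\rho\circ F$ gives uniform bounds on $f(x)$ and on the Gaussian quadratics, inverting the logarithms bounds $\|w_k^S\|$, and the recursion $x_k=g_k(x_{k-1})+w_k$ then bounds every $x_k$ (you run this forward by induction where the paper argues by contradiction along an unbounded subsequence; the content is the same). Compactness of $F^{-1}(\Lambda)$, hence of $S_0$, plus $g_k,h_k\in C^2$ then gives boundedness and uniform continuity of $F^{(1)}$. The factorization of the process part of $d^\R{T}U(x)d$ as $(G^S)^\R{T}\widetilde{Q}^{-1}G^S$ plus the $H_k$ terms is also exactly the paper's.

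The gap is your final claim that the coefficient bounds together with $\sigma_{\min}(G^S(x))>\eta$ yield $d^\R{T}U(x)d\ge\lambda_{\min}\|d\|^2$ for \emph{all} $d$, ``stronger than what is needed.'' This is false whenever $\Sc{W}^G\neq\emptyset$. The matrix $U$ carries curvature only from the Student's t blocks: its process part is a quadratic form in the subvector $d_{\Sc{W}^S}$ alone (the weights $(Q_k^S)^{-1}$ and the blocks $A_k(\Sc{W}^S,\Sc{W}^S)$, $C_k(\Sc{W}^S,\Sc{W}^S)$ live only on the $\Sc{W}^S$ coordinates), and its measurement part $\R{diag}(\{H_k\})$ is generically rank deficient. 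For the T-Robust smoother, for instance, $\Sc{W}^S=\emptyset$ and $U=\R{diag}(\{H_k\})$ with $\R{rank}(H_k)\le m<n$, so $U$ has a nontrivial null space and no uniform lower bound on all of $\mB{R}^{nN}$ is possible. Your estimate only delivers $d^\R{T}Ud\ge c\,\|d_{\Sc{W}^S}\|^2$. The missing step---and the reason hypothesis~\eqref{Assumption} is stated only on $\mathrm{Null}(F^{(1)}(x))$---is the paper's observation that $d\in\mathrm{Null}(F^{(1)}(x))$ forces $d_{\Sc{W}^G}=0$, because the component $w^G$ of $F$ in~\eqref{FDef} contributes to $F^{(1)}$ the unit lower triangular, hence nonsingular, block $G^G(x)$ acting on the Gaussian-modeled coordinates. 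Only after that reduction does $\|d_{\Sc{W}^S}\|=\|d\|$ hold, at which point your lower bound on the $\Sc{W}^S$ block closes the argument as in the paper.
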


\begin{proof}
We first show that both $\Lambda$ and $F^{-1}(\Lambda)$ are bounded. 
The first claim follows immediately by the coercivity of $\rho$ in~\eqref{rhoDef}. 
To verify the second claim, we will show that for any sequence of 
$x^\nu$ with $\|x^\nu\| \rightarrow \infty$, 
we can find a subsequence  $J$ such that $\lim_{\nu \in J} \|w^\nu\| = \infty$, which implies
the existence of subsequence $I$ such that either $\lim_{\nu \in I} \|w^G\| = \infty$ or 
$\lim_{\nu \in I} f(x^\nu) = \infty$. 
In particular there does not exist an unbounded 
sequence $\{x^\nu\}$ with $F(x^\nu) \subset \Lambda$ , and 
therefore $F^{-1}(\Lambda)$ must be bounded. 

If $\|x^\nu\|\rightarrow \infty$, we can find an index 
$k \subset [1, \dots, N]$ and subsequence $J$ such that $\lim_{\nu \in J}\| x^\nu_k\|= \infty$. 
Now, either $\lim_{\nu \in J} w^\nu_k = \infty$ and we are done, 
or $\lim_{\nu \in J}\|g_k(x^\nu_{k-1})\| = \infty$, so $\lim_{\nu \in J} 
\|x^\nu_{k-1}\| = \infty$. 
Iterating this argument, we arrive at the limiting case $w^\nu_1 = x^\nu_1 - x^0_1$, 
and so if all $\|w^\nu_j\|$ are bounded for $j > 1$, we can guarantee that $\lim_{\nu \in J}
\|w^\nu_1\|= \infty$.

Since $F$ is twice continuously differentiable by the hypotheses on $g$ and $h$, 
the boundedness of $F^{-1}(\Lambda)$ establishes the boundedness and uniform
continuity of $F^{(1)}$ on $S_0$ in~\eqref{snot} for any $\tau > 0$.

It remains to show that condition~\eqref{Assumption} is satisfied. 
Let $\Sc{W}^G$, $\Sc{W}^S$ denote the indices
associated to all subvectors $w_k^G$ and $w_k^S$
within $w_k$. If $d \in \mathrm{Null}(F^{(1)}(x))$, then necessarily 
$d_{\Sc{W}^G} = 0$. This is simply because $F^{(1)}$ is nonsingular 
on $\Sc{W}^G$, since it contains the sub matrix 
\[
G^G(x) :=
\begin{bmatrix}
    \R{I}  & 0      &          &
    \\
    -(g_2^{(1)})^{G}(x_{1})   & \R{I}  & \ddots   &
    \\
        & \ddots &  \ddots  & 0
    \\
        &        &   -(g_N^{(1)})^G(x_{N-1})   & \R{I}
\end{bmatrix}\;,
\]
which is the standard process matrix $G$ projected to those 
coordinates where Gaussian modeling is applied. 
To finish the analysis, we present the full form of the matrix $U$ 
restricted to $\Sc{W}^S$:

\begin{equation}
\label{hessianApproxU}
U
=
\begin{bmatrix}
U_1 & A_2^\R{T} & 0 & \\
A_2 & U_2  & A_3^\R{T} & 0 \\
0 & \ddots & \ddots& \ddots & \\
& 0 & A_N & U_N
\end{bmatrix} 
+ \R{diag}(\{H_k\}
,
\end{equation}
where
\begin{eqnarray}
\nonumber
&A_k 
\label{AkS}
&=
 -\frac{r (Q_k^S)^{-1}(g^{(1)}_k)^S }{r + \|w_k^S\|_{(Q_k^S)^{-1}}^2}\\
\nonumber
&U_k 
\nonumber
&= 
\label{UkS}
\frac{r((g^{(1)}_{k+1})^S)^\R{T}(Q^S_{k+1})^{-1}(g^{(1)}_{k+1})^S}
{{r + \|w_{k+1}^S\|_{(Q_{k+1}^S)^{-1}}^2}}
+  \frac{r (Q_k^S)^{-1}}{r + \|w_k^S\|_{(Q_k^S)^{-1}}^2} \\
\nonumber
&H_k
&=
\frac{s((h_k^{(1)})^S)^\R{T}(R_k^S)^{-1}(h_k^{(1)})^S}
{(s + \|v_k^S\|_{(R_k^S)^{-1}}^2)} \\
\label{HkS}
\end{eqnarray}

Note that we can write the first summand in~\eqref{hessianApproxU} as
\(
(G^S)^T \widetilde Q^{-1} G^S\;,
\)
where 
\[
\begin{aligned}
\quad 
\widetilde Q^{-1} &:= \R{diag}(\{\widetilde Q_k^{-1}\}),
\quad
\widetilde Q_k^{-1} &=  \frac{r (Q_k^S)^{-1}}{r + \|w_k^S\|_{(Q_k^S)^{-1}}^2}\;.
\end{aligned}
\]
Since $F^{-1}(\Lambda)$ is bounded, the denominators of $\widetilde Q_k^{-1}$ are bounded, 
and so eigenvalues of $\widetilde Q_k^{-1}$ are bounded from below, and the singular 
values of $G^S$ are bounded from above. 

We now have
\[
0 < \eta_{\min} \leq \sigma_{\min} (G^S) \leq \sigma_{\max} (G^S) \leq \eta_{\max}
\] 
for all $x$,  where the upper found follows from Theorem~\cite[2.2]{AravkinBellBurkePillonetto2013} together with compactness of $F^{-1}(\Lambda)$. 

Then, by~\cite[Theorem 2.1]{AravkinBellBurkePillonetto2013}, we have
\[
\kappa((G^S)^T \widetilde Q^{-1} G^S) \leq 
\frac{\lambda_{\max}(\widetilde Q^{-1})\eta^2_{\max}}
{\lambda_{\min}(\widetilde Q^{-1}) \eta_{\min}^2}\;.
\]
for all $x\in F^{-1}(\Lambda)$.
This completes the proof. 
\end{proof}

\begin{remark}
One can also consider conditions on the individual $g_k^S$ that can produce a lower bound $\eta$ on $G^S$,
as required by Corollary~\ref{SmootherSatisfaction}. One such condition is
\begin{equation}
\label{lowerBound}
0< \eta \leq \left\{ 1 + \sigma^2_{\min}(g_{k+1}^{(1)}) - \sigma_{\max}(g_k^{(1)}) - \sigma_{\max}(g_{k+1}^{(1)}) \right\}
\end{equation}
If this condition is satisfied, then by Theorem~\cite[2.2]{AravkinBellBurkePillonetto2013}, $\eta < \sigma_{\min}(G^S)$. 
However, this condition is sufficient, and may not be necessary. 
\end{remark}

\section{Numerical Experiments}
\label{SimulationRobust}
%

\subsection{T-Robust Smoother: function reconstruction using splines}
\label{sec:LinearExample}

In this section we compare the new T-robust smoother with the
$\ell_2$-Kalman smoother~\cite{Bell2008} and with the
$\ell_1$-Laplace robust smoother~\cite{AravkinL12011}, both
implemented in~\cite{ckbs}.
The {\it ground truth}
for this simulated example is
\[
x(t) = \begin{bmatrix} -\cos(t) & -\sin(t)\end{bmatrix}^\R{T} \; .
\]
The time between measurements is a constant $\Delta t$.
We model the two components of the state as the first and second integrals of 
white noise, so that
$$
g_k ( x_{k-1} ) =
\begin{bmatrix}
    1        & 0
    \\
    \Delta t & 1
\end{bmatrix} x_{k-1}
\; ,
\qquad
Q_k =
\begin{bmatrix}
    \Delta t   & \Delta t^2 / 2
    \\
    \Delta t^2 / 2 & \Delta t^3 / 3
\end{bmatrix}\;.
$$
This stochastic model for function reconstruction underlies the Bayesian interpretation
of cubic smoothing splines, see \cite{Wahba1990} for details.\\
The measurement model for the conditional mean of measurement $z_k$ given state $x_k$ is
defined by
$$
h_k( x_k ) =  \begin{bmatrix} 0 & 1 \end{bmatrix} x_k = x_{2,k} \;,
\qquad R_k = \sigma^2\;,
$$
where \( x_{2,k} \) denotes the second component of \( x_k \),
$\sigma^2 = 0.25$ for all experiments, and the degrees of
freedom parameter $k$ was set to 4 for the Student's t methods.

The measurements \( \{z_k \} \) were generated
as a sample from
\[
z_k = x_2 ( t_k ) + v_k, \quad t_k=0.04\pi \times k
\]
where  $k=1,2,\ldots,100$. The measurement noise $v_k$ was
generated according to the following schemes.
\begin{enumerate}
\item {\bf Nominal}:
\(v_k \sim \B{N} ( 0 , 0.25 ).\)\\
\item {\bf Gaussian contamination}
\[v_k \sim (1 - p ) \B{N} (0, 0.25 ) + p \B{N} ( 0 , \phi ),\]
for $p \in \{ 0.1 , 0.2, 0.5 \}$
and $\phi \in \{1, 4, 10, 100 \}$.\\
\item  {\bf Uniform contamination}
\[v_k \sim (1 - p ) \B{N} (0, 0.25 ) + p \B{U} (-10, 10 ),\]
for $p \in \{0.1, 0.2, 0.5\}$.
\end{enumerate}

Each experiment was performed 1000 times.
Table~\ref{SimulationResults} presents
the results for our simulated fitting showing the
median Mean Squared Error (MSE) value and a quantile interval containing
95\% of the results. The MSE is defined by
\begin{equation}
\label{MSEeq}
    \frac{1}{N} \sum_{k=1}^N
        [ x_1 ( t_k ) - \hat{x}_{1,k} ]^2
        +
        [ x_2 ( t_k ) - \hat{x}_{2,k} ]^2 ,
\end{equation}
where \( \{ \hat{x}_k \} \) is the corresponding estimating sequence.

From Table~\ref{SimulationResults} one can see that
T-Robust and
the $\ell_1$-smoother perform as well as the (optimal)
$\ell_2$-smoother at nominal conditions, and that both continue to
perform at that same level for a variety of outlier generating
scenarios. T-Robust always performs at least as well as the
$\ell_1$-smoother, and it gains an advantage when either the
probability of contamination is high, or the contamination is
uniform. This is likely due to the re-descending influence function
of the Student's t-distribution --- the smoother effectively throws
out bad points rather than simply decreasing their impact to a
certain threshold, as is the case for the $\ell_1$-smoother.
As an example, results coming from a single run
for the case where 50\% of measurements are contaminated
with the uniform distribution on $[-10, 10]$ are displayed in Figure \ref{simComp}.
Notice that
T-Robust has an advantage over
the $\ell_1$-smoother.
\begin{table}
\small \caption{ \label{SimulationResults} Function reconstruction
via spline: median MSE over 1000 runs and intervals containing 95\%
of MSE results.}
\begin{center}
\begin{tabular}{|c|c|c|c|c|}\hline
{\bf Outlier }
    & {p}
    & {\B {$\ell_2$ MSE}}
    & {\B {$\ell_1$ MSE}}
    & {\B {Student's t MSE}}
\\ \hline
Nominal
 & ---
    &.04(.02, .1)
    &.04(.01, .1)
    & .04(.01, .09)
\\ \hline
$\B{N}(0, 1)$
& .1
    &.06(.02, .12)
    &.04(.02, .10)
    &.04(.02, .10)
\\ \hline
$\B{N}(0, 4)$
& .1
  &.09(.04, .29)
    &.05(.02, .12)
    &.04(.02, .11)
\\ \hline
$\B{N}(0, 10)$
& .1
    &.17(.05, .55)
    &.05(.02, .13)
    &.04(.02, .11)
\\ \hline
$\B{N}(0, 100)$
& .1
    &1.3(.30, 5.0)
    &.05(.02, .14)
    &.04(.02, .11)
\\ \hline
$\B{U}(-10, 10)$
& .1
    &.47(.12, 1.5)
    &.05(.02, .13)
    &.04(.02, .10)
\\ \hline
$\B{N}(0, 10)$
& .2
    &.32(.11, .95)
    &.06(.02, .19)
    &.05(.02, .16)
\\ \hline
$\B{N}(0, 100)$
& .2
    &2.9(.94, 8.5)
    &.07(.02, .22)
    &.05(.02, .14)
\\ \hline
$\B{U}(-10, 10)$
& .2
    &1.1(.36, 3.0)
    &.07(.03, .26)
    &.05(.02, .13)
\\ \hline
$\B{N}(0, 10)$
& .5
   &.74(.29, 1.9)
    &.13(.05, .49)
    &.10(.04, .45)
\\ \hline
$\B{N}(0, 100)$
& .5
    &7.7(2.9, 18)
    &.21(.06, 1.6)
    &.09(.03, .44)
\\ \hline
$\B{U}(-10, 10)$
& .5
    &2.6(1.0, 5.8)
    &.20(.06, 1.4)
    &.10(.03, .44)
\\ \hline
\end{tabular}
\end{center}
\end{table}

\begin{figure*}
\begin{center}
{\includegraphics[scale=0.45]{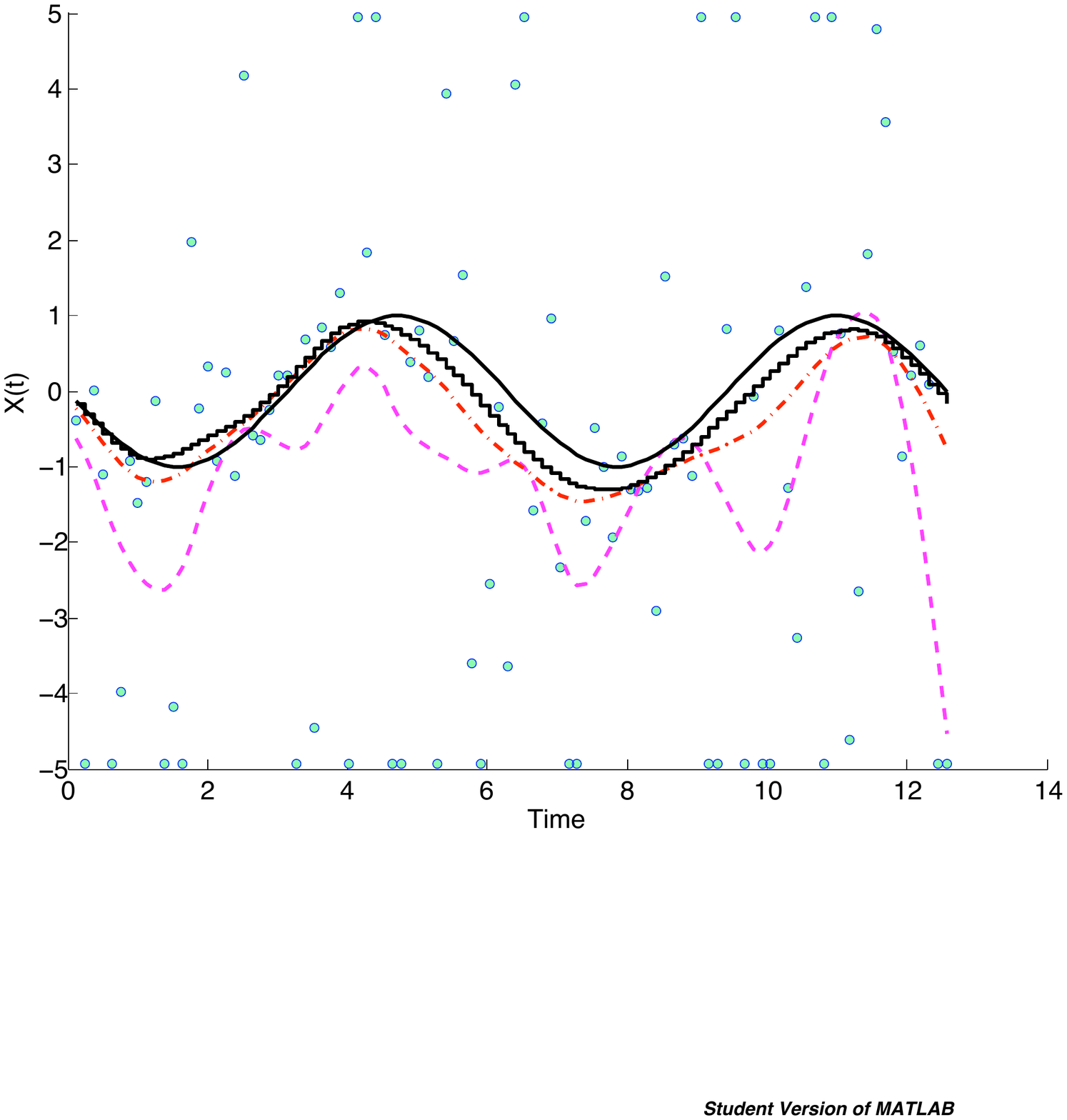}}
\caption{\label{simComp} Function reconstruction via spline: performance of $\ell_2$ Kalman smoother
(dash), $\ell_1$-Laplace Robust smoother (dash-dot), and T-Robust
(stair-case solid) on contaminated normal model with 50\% outliers
distributed uniformly on [-10,10]. True state x(t) is drawn as solid
line. Measurements appear as `o' symbols, and all measurements
visible off of the true state are outliers in this case. Values
outside [-5,5] are plotted on the axis limits. }
\end{center}
\end{figure*}

\subsection{T-Robust Smoother: Van Der Pol oscillator}
\label{NumexpNonlinear}

\begin{figure}
\hspace{-.4in}  {\includegraphics[scale=0.57]{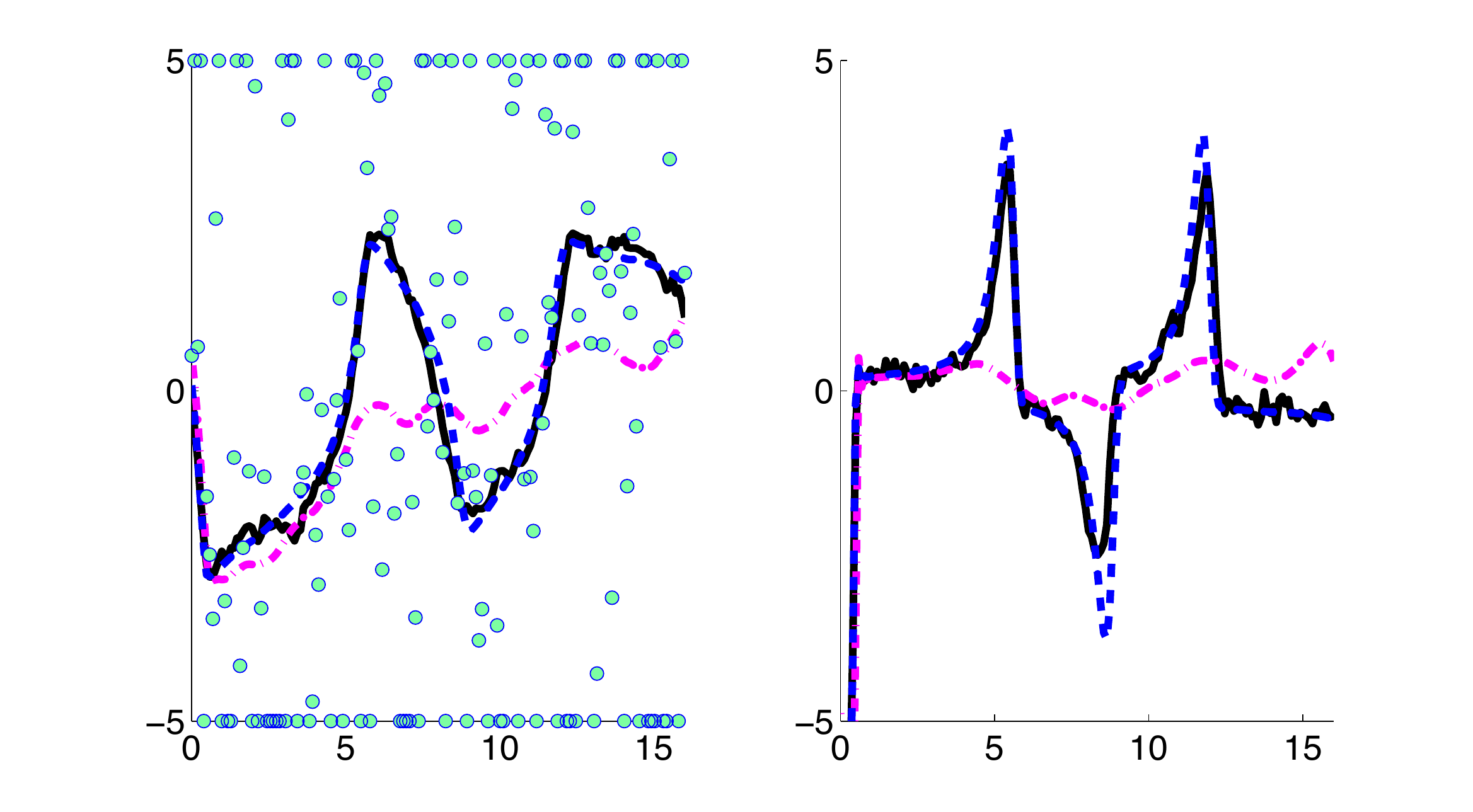}}
\caption{Van Der Pol oscillator: smoother fits for X-component (left) and Y-component (right), with
{\bf 70\% outliers $N(0, 100)$}.
Black solid line is truth, magenta dash-dot is the $\ell_1$
smoother result, and blue dashed line is T-robust. Measurements on X-component
are shown as dots, with outliers outside the range $[-5, 5]$ plotted on top and bottom axes. \label{VanDerPolFig}}
\end{figure}

In this section, we present results for the
Van Der Pol oscillator (VDP), described in detail
in ~\cite{AravkinL12011}. The VDP oscillator is a
coupled nonlinear ODE defined by
\begin{eqnarray*}
\dot{x}_1 (t) &=& x_2 (t)\\
\dot{x}_2 (t) &=& \mu [ 1 - x_1 (t)^2 ] x_2 (t) - x_1 (t)
\end{eqnarray*}
The process model here is the Euler approximation for $X(t_k)$
given $X(t_{k-1})$:
\[
g_k ( x_{k-1} ) = \left( \begin{array}{cc}
    x_{1,k-1} + x_{2,k-1} \Delta t
    \\
    x_{2,k-1} + \{ \mu [ 1 - x_{1,k}^2 ] x_{2,k} - x_{1,k} \} \Delta t
\end{array} \right) \; .
\]
For this simulation, the {\it ground truth} is obtained from
a stochastic Euler approximation
of the VDP.
To be specific,
with \( \mu = 2 \), \( N = 164 \) and \( \Delta t = 16 / N \),
the ground truth state vector \( x_k \) at time \( t_k = k \Delta t \)
is given by \( x_0 = ( 0 , -0.5 )^\R{T} \) and
for \( k = 1, \ldots , N \),
$x_k = g_k ( x_{k-1} ) + w_k$,
where $\{ w_k \}$ is a realization of
independent Gaussian noise with variance $0.01$.

In~\cite{AravkinL12011}, the $\ell_1$-Laplace smoother was shown to have superior
performance to the $\ell_2$-smoother, both implemented in~\cite{ckbs}.
We compared the performance of the nonlinear T-robust and nonlinear
 $\ell_1$-Laplace smoothers, and found that T-robust gains
 an advantage in the extreme cases of 70\% outliers.
 Figure~\ref{VanDerPolFig} illustrates
 results coming from a single representative run.
 For 40\% or fewer outliers, it is hard 
 to differentiate the performance of the two smoothers for this 
 nonlinear example.  

\subsection{T-Robust Smoother: underwater Tracking Application}

This application is described in detail in \cite{AravkinL12011},
so we just give a brief overview here.
In \cite{AravkinL12011} we used the application to test the
$\ell_1$-Laplace smoother. Here we use it for a qualitative
comparison between the T-Robust smoother,
the $\ell_1$-Laplace smoother, and the $\ell_2$ smoother with
outlier removal.

In this experiment, a tracking target was hung on a steel cable
approximately 200 meters below a ship. The pilot was attempting
to keep the ship in place (hold station) at specific coordinates,
but the ship was pitching and rolling due to wave action. The
measurements for the smoother were sound travel times
between the tracking target and four bottom
mounted transponders at known locations, and pressure readings
from a gauge that was placed on the target. Tracking data
was independently verified using a GPS antenna mounted on a ship,
and the GPS system provided sub-meter accuracy in position.

Pressure measurements in absolute bars were converted to depth in meters
by the formula
\[ \R{depth} = 9.9184 ( \R{pressure} - 1 ). \]
We use $N$ to denote the total number of time points at which
we have tracking data.
For $k = 1, \dots, N$,
the state vector at time $t_k$ is defined by
\(
x_k = ( e_k , n_k , d_k , \dot{e}_k , \dot{n}_k , \dot{d}_k )^\R{T}
\)
where $( e_k , n_k , d_k )$ is the ( east, north, depth )
location of the object (in meters from the origin),
and $( \dot{e}_k , \dot{n}_k , \dot{d}_k )$
is the time derivative of this location. 

The measurement vector at time $t_k$ is denoted by $z_k$.
The first four components of $z_k$ are the range measurements to the
corresponding bottom mounted transponders and the last component is the depth
corresponding to the pressure measurement.
For \( j = 1 , \ldots , 4 \),
the model for the mean of the corresponding range measurements was
\[
h_{j,k} ( x_k ) = \| ( e_k , n_k , d_k ) - b_j \|_2 - \Delta r_j \; .
\]
These measurements were assumed
independent with standard deviation $3$ meters.
These depth measurements were assumed to have standard deviation of
0.05 meters.\\
We use \( \Delta t_k \) to denote \(  t_{k+1} - t_k \).
The model for the mean of \( x_{k+1} \) given \( x_k \) was
\[
\begin{array}{lll}
g_{k+1} ( x_k ) = \\
(
    e_k + \dot{e}_k \Delta t_k , \;
    n_k + \dot{n}_k \Delta t_k , \;
    d_k + \dot{d}_k \Delta t_k , \;
    \dot{e}_k , \;
    \dot{n}_k , \;
    \dot{d}_k
)^\R{T} \; .
\end{array}
\]
\begin{figure}
\begin{center}
{\includegraphics[scale=0.38]{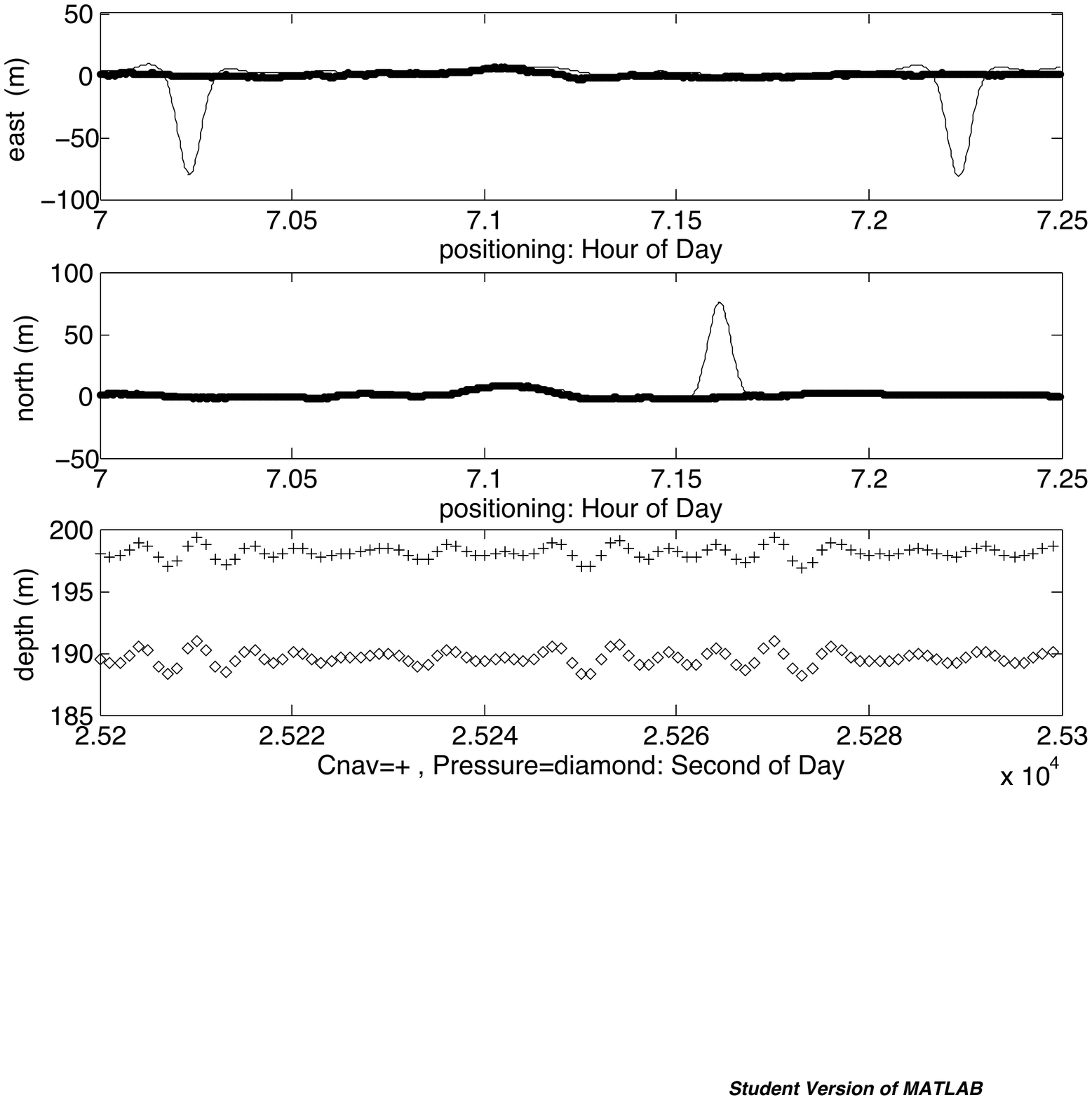}}
\end{center}
\caption{
    \label{GaussianTrack}
    Track: Independent GPS verification (thick line and +),
    $\ell_2$-smoother estimate (thin line). Note
        the large outliers in the data.
}
\end{figure}
\begin{figure}
\begin{center}
{\includegraphics[scale=0.26]{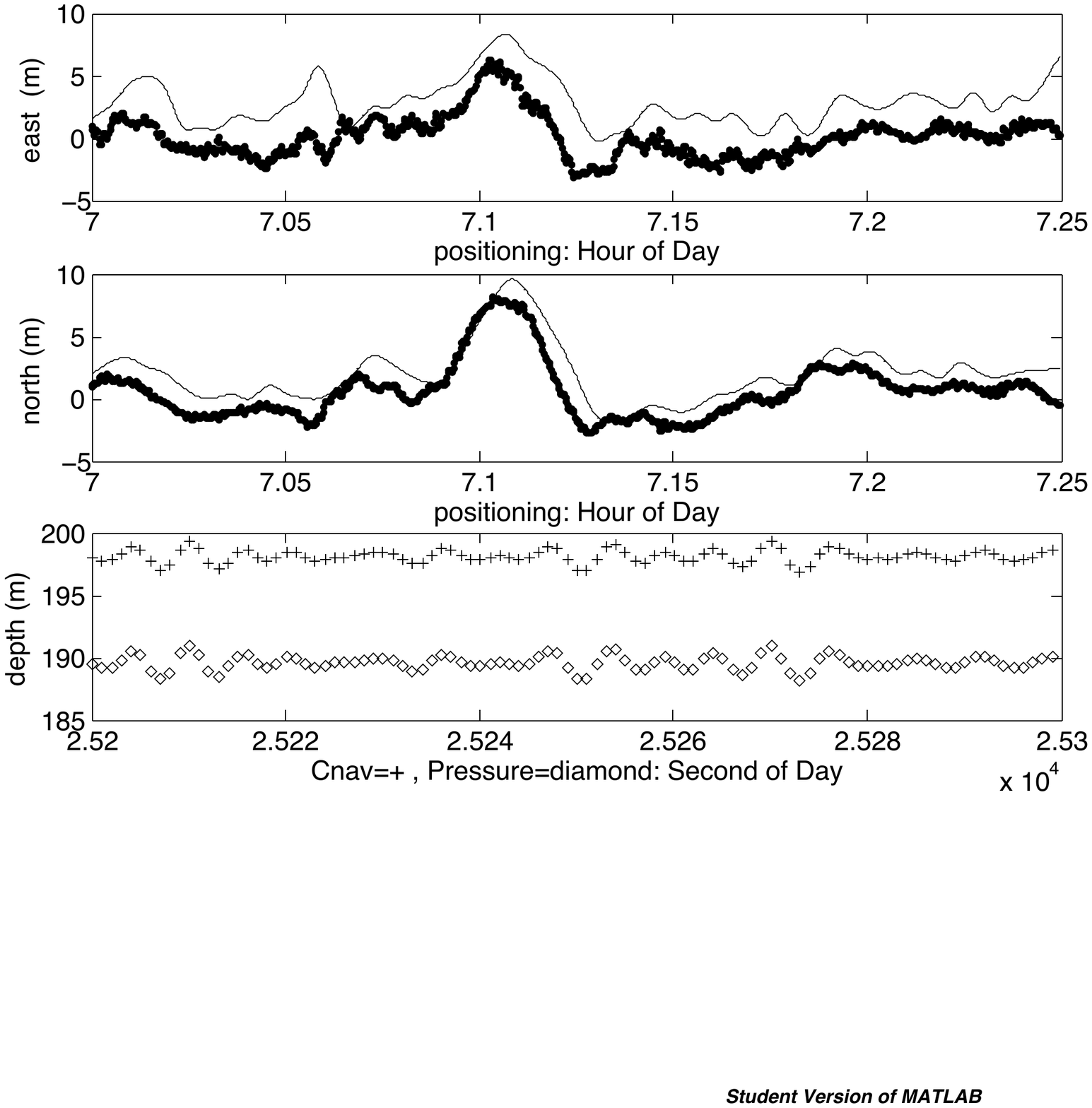}}
($a_1$)
{\includegraphics[scale=0.26]{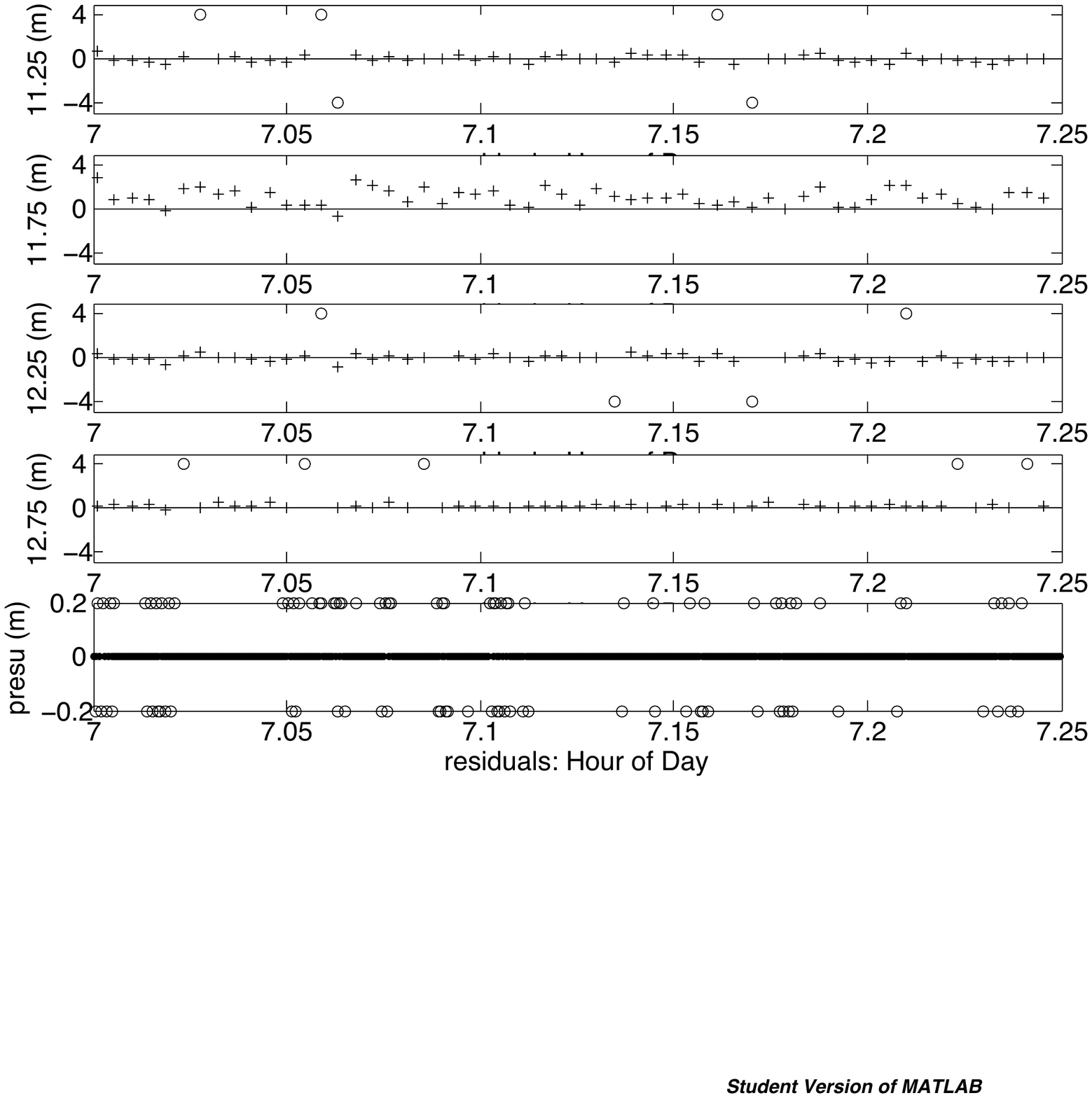}}
($a_2$)
{\includegraphics[scale=0.25]{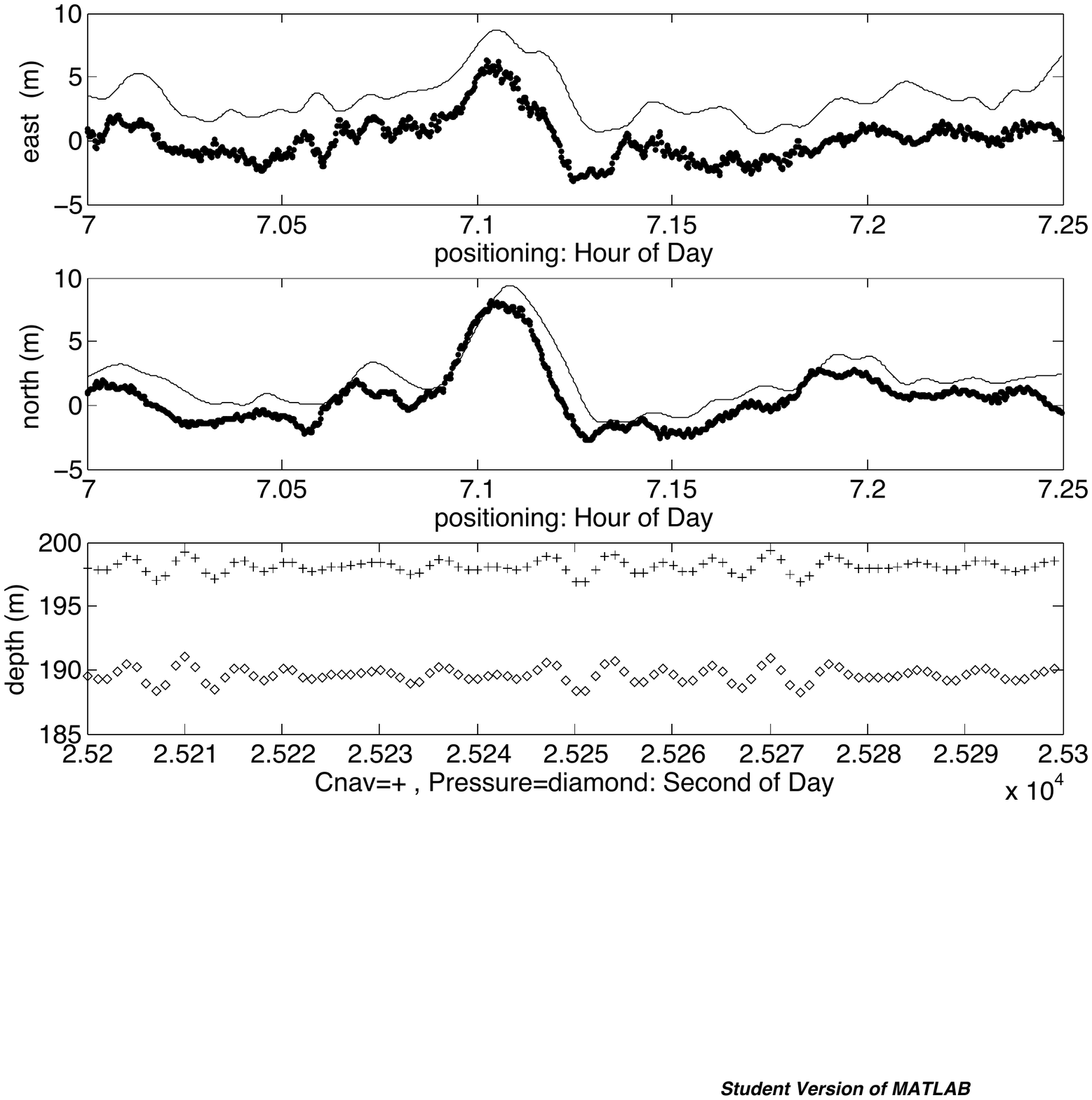}}
($b_1$)
{\includegraphics[scale=0.25]{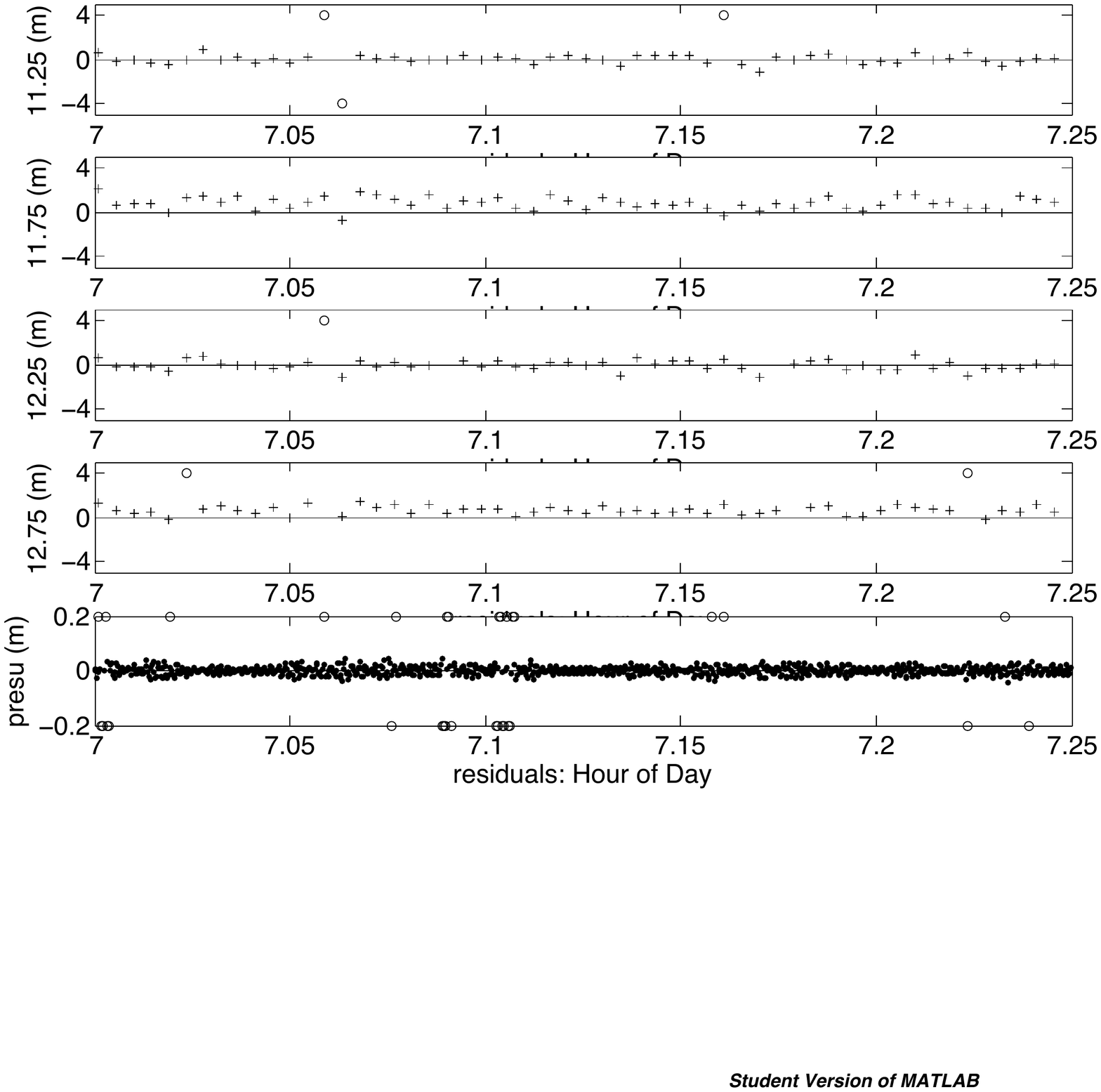}}
($b_2$)
{\includegraphics[scale=0.26]{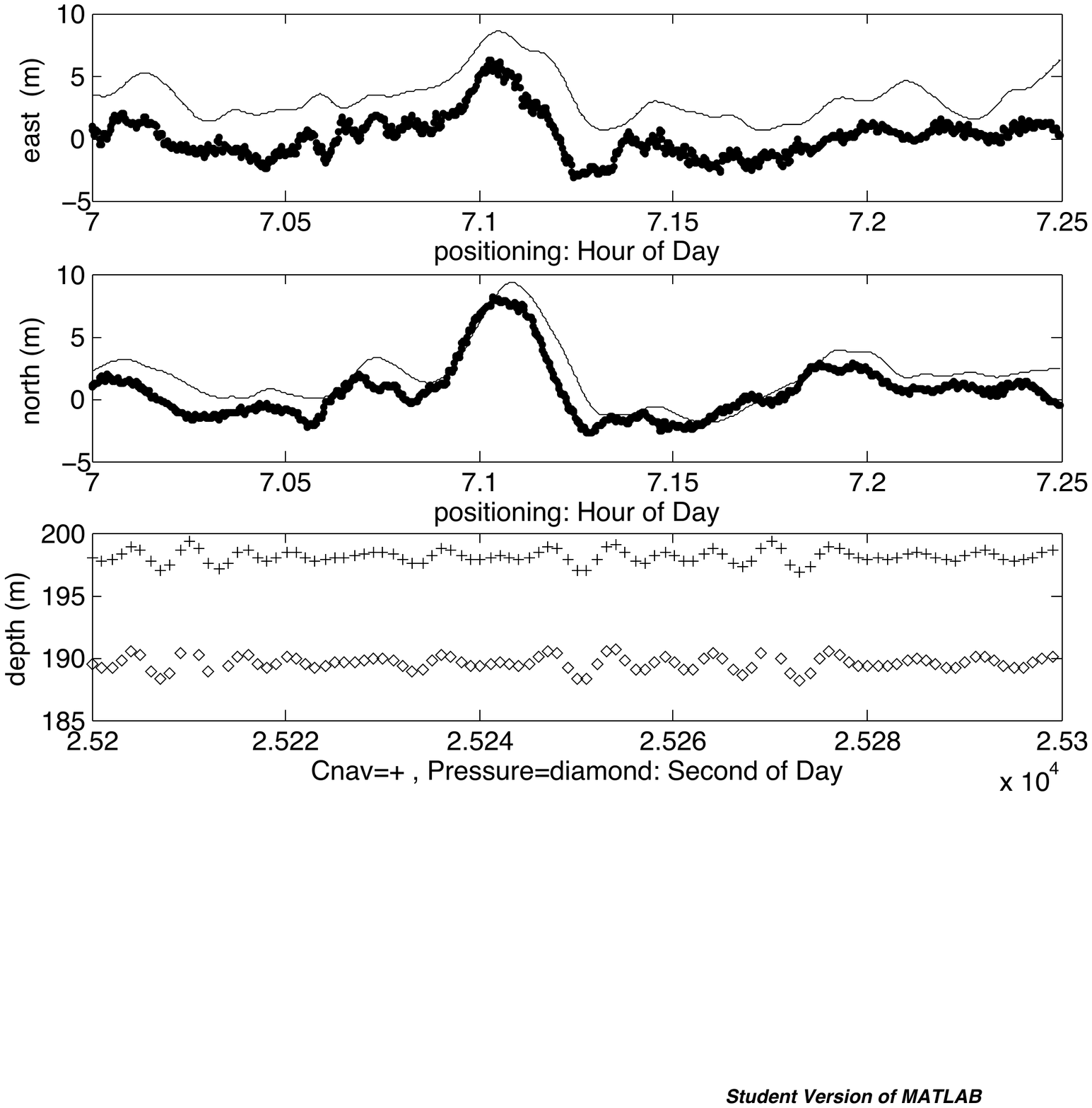}}
($c_1$)
{\includegraphics[scale=0.26]{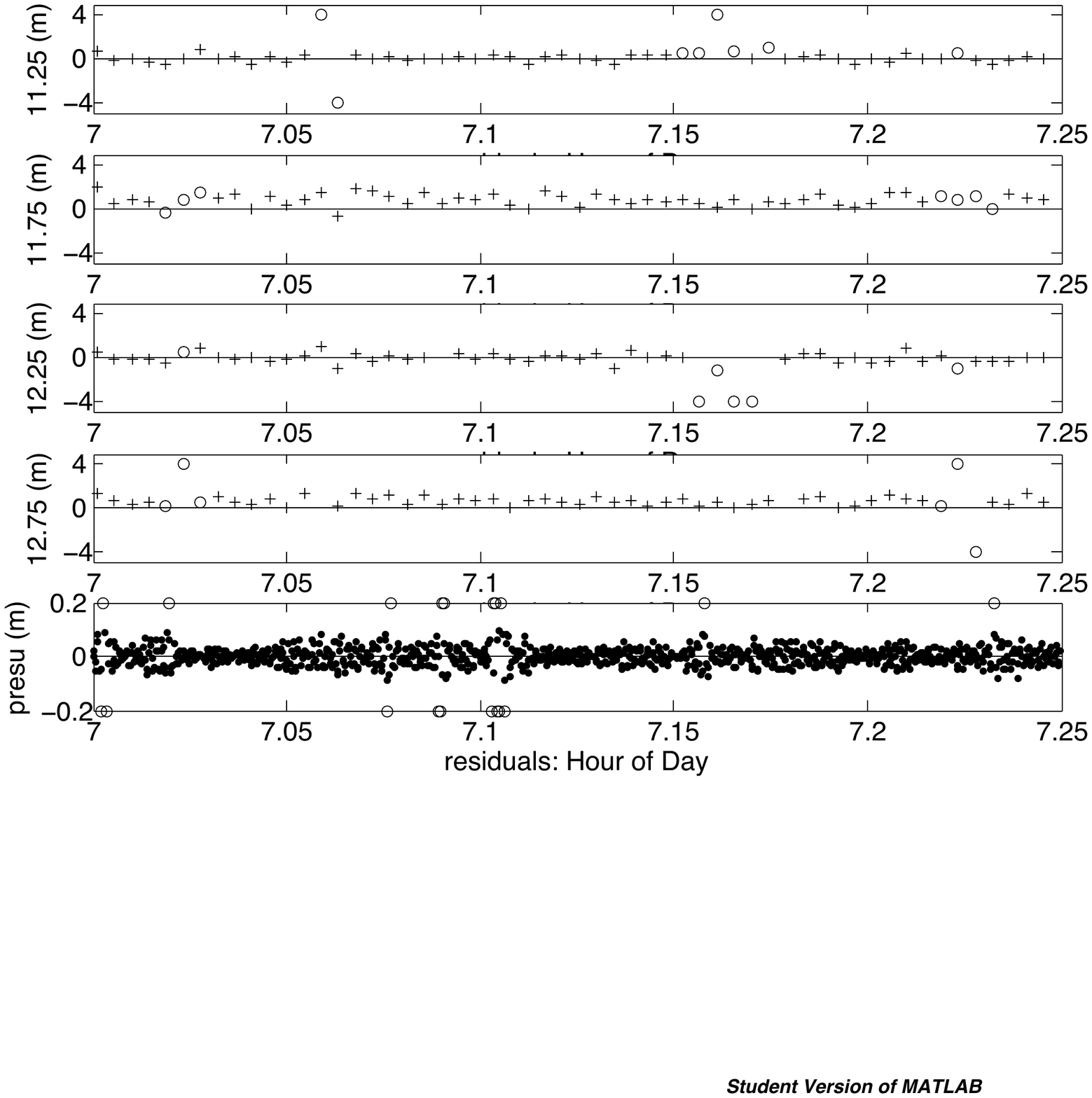}}
($c_2$)
\end{center}
\caption{
    \label{CompareTrack}
    Track: Independent GPS verification (thick line and +)
    and Residuals for 
    (a): $\ell_1$-Laplace smoother (thin line),
    (b): T-Robust smoother (thin line),
        (c): $\ell_2$-smoother with outlier removal.
}
\end{figure}
The process noise corresponding to east, north, and depth components of the conditional distribution of \( x_{k+1} \) given \( x_k \)
was assumed to be Gaussian, with mean zero and standard deviation
$ .01 \Delta t_k$.
The process noise corresponding to the derivative vector of
east, north, and depth components
of the conditional mean \( x_{k+1} \) given \( x_k \)
was also assumed Gaussian with mean zero
and standard deviation $ .2 \Delta t_k$.

$\ell_2$-smoother results without outlier removal are shown in Figure~\ref{GaussianTrack}.
There are three large peaks (two in the east component and
one in the north component of the state) that are due to
measurement outliers, and require either an outlier removal
strategy or robust smoothing.

Three fits are shown in Figure~\ref{CompareTrack}: $\ell_1$-Laplace,
T-Robust, and $\ell_2$-smoother with outlier removal.
The darker curves appearing below the track are independent verifications
using the GPS tracking near the top of the cable.
A depth of 198 meters was added to the depth location of the GPS antenna
so that the depth comparison can use the same axis for both the GPS
data and the tracking results.
Note that the time scale for the depth plots different (much finer) than
the north, east, down plots,
and demonstrates the accuracy of the GPS tracking as validated by the
pressure sensor.\\
 T-Robust, like the $\ell_1$-Laplace smoother, was able to use the whole data sequence, despite large outliers in the data.
The fits look very similar, and it is clear that T-Robust
can also be used for smoothing in the presence of outliers.
Note that the T-Robust track (b) is smoother than the
$\ell_1$-Laplace track (a) but has more detail than the
$\ell_2$-smoother track with outlier removal (c). This is
easiest to see by comparing the east coordinates
in (a), (b), and (c) of Figure ~\ref{CompareTrack},
between 7.2 and 7.25 hours.

The residual plots in Figure~\ref{CompareTrack} are quite revealing.
Outliers are defined as measurements corresponding to residuals
with absolute value greater than three standard deviations from the mean.
All outliers
are shown as `o' characters, and those that
fall outside the axis limits are plotted on the vertical axis
limit lines.
Note that the $\ell_2$-smoother with outlier removal
detects outliers after the first fit
that are not outliers after the second fit.
The peaks in Figure~\ref{GaussianTrack} are large enough to
influence the entire fit, and so some points which are actually
`good' measurements are removed by the 3-$\sigma$ edit rule,
resulting in `over-smoothing' of the outlier removal track
and more detail in both of the robust smoothers in Figure~\ref{CompareTrack}.

The $\ell_1$-Laplace smoother pushes more of the residuals to zero,
particularly those corresponding to depth measurements,
which are the most reliable and frequent.
The T-Robust smoother is somewhere in between ---
the residuals for the depth track are smaller in comparison
to the residuals of the $\ell_2$-smoother, but are not
set to zero as by the $\ell_1$-Laplace smoother.
As discussed previously, these features are artifacts of
the behavior of the distributions at zero, and
the choice of smoother should be guided by particular
applications.

\subsection{T-Trend Smoother: reconstruction of a sudden change in state}
\label{NumexpT-Trend}

%
We present a proof of concept result for the T-Trend smoother,
using two Monte Carlo studies of 200 runs.
In the first study, the state vector, as well as the process
and measurement models, are the same as in Sec.~\ref{sec:LinearExample}. 
At any run,  $x_2$ has to be reconstructed
from 20 measurements corrupted by a white Gaussian noise of variance 0.05 and
collected on $[0,2\pi]$ using a uniform sampling grid.
The top panel of Figure \ref{jumpSmoother1} reports the boxplot of the 200 root-MSE errors
for the $\ell_2$-, $\ell_1$-, and T-Trend 
smoothers, while the top right panel of Figure \ref{jumpSmoother1}
displays the estimate obtained in a single run. It is apparent that
the performance of the three estimators
is very similar.

The second experiment is identical to the first one except that we
introduce a `jump' at the middle of the sinusoidal wave. The bottom
panel of Figure \ref{jumpSmoother1} reveals the superior performance
of the T-Trend smoother under these perturbed conditions. The result
depicted in the bottom right panel of Figure \ref{jumpSmoother1} for a single 
run of the experiment is representative of the average performance of
the estimators. The estimate achieved by the $\ell_2$-smoother
(dashed-line) does not follow the jump well (the true state is the
solid line). The $\ell_1$-smoother (dashdot) does a better job than
the $\ell_2$-smoother, and the T-trend smoother outperforms the
$\ell_1$-smoother, following the jump very closely while
still providing a good solution along the rest of the path.

\begin{figure*}
\begin{center}
  {\includegraphics[scale=0.25,angle=-90]{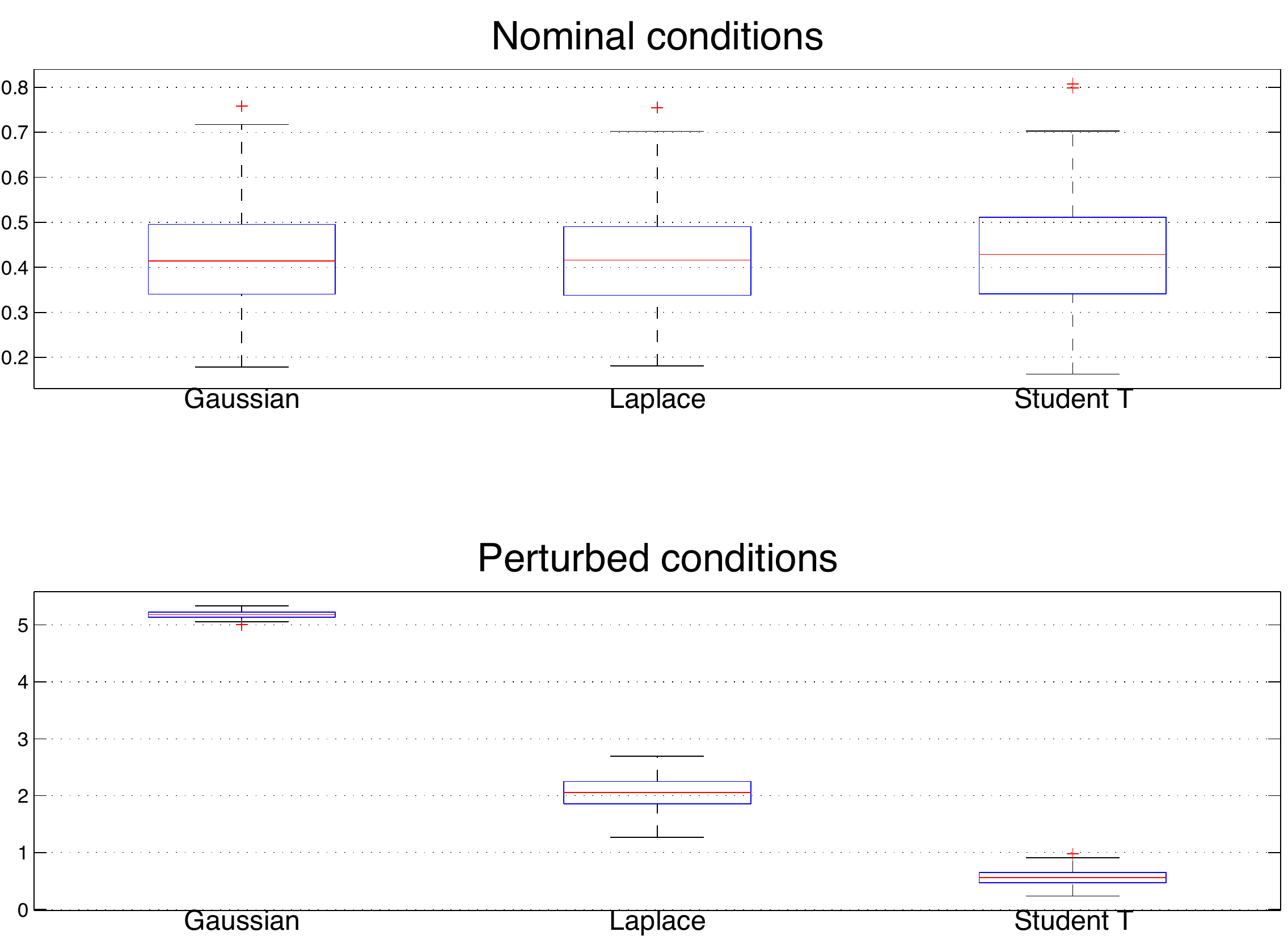}}
    {\includegraphics[scale=0.25,angle=-90]{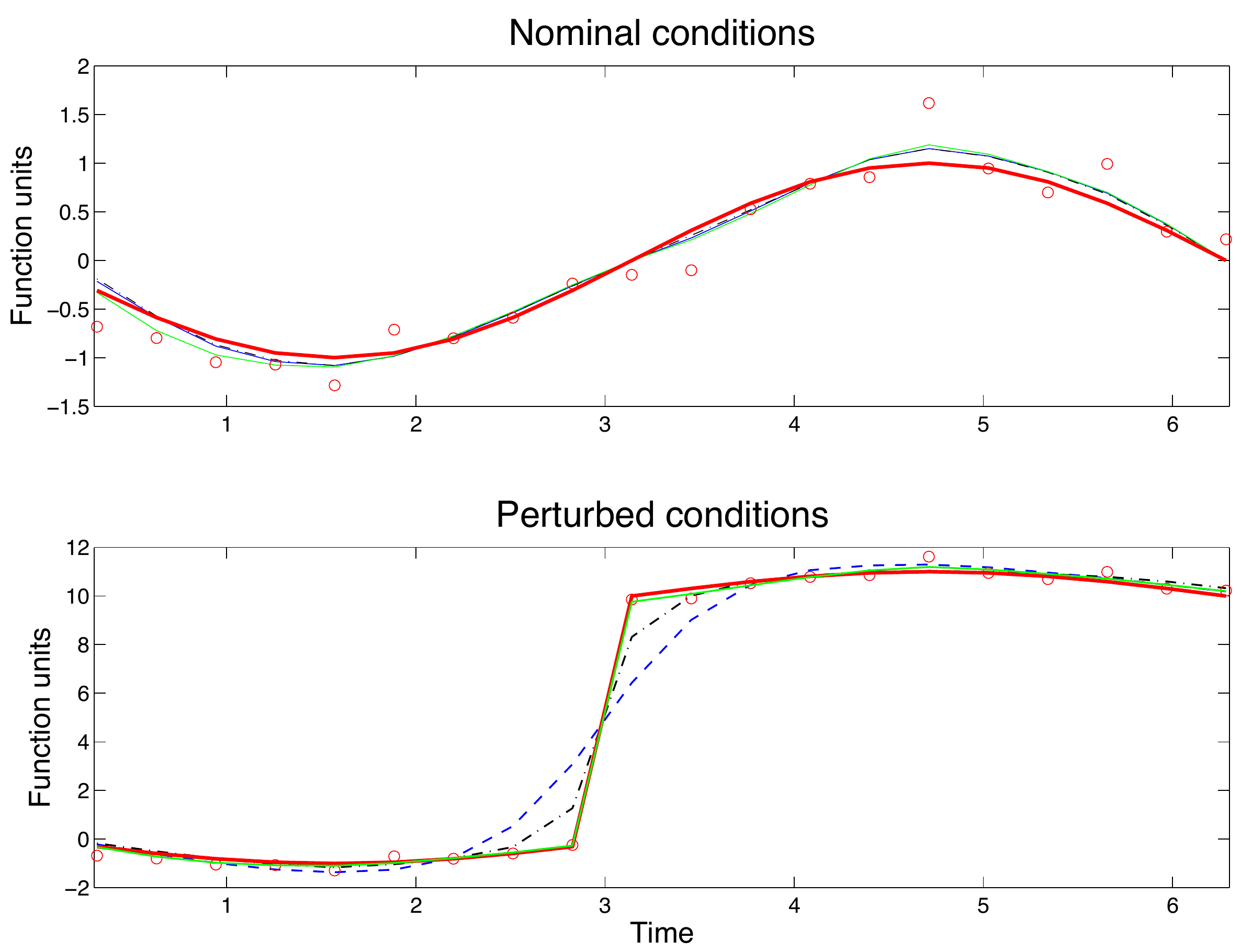}}
\end{center}
\caption{Reconstruction of a sudden change in state obtained by $\ell_2$, 
$\ell_1$, and T-Trend smoothers. 
{\bf Left:} Boxplot of reconstruction errors under nominal (top) and perturbed
(bottom) conditions.  {\bf Right}: Reconstructions obtained 
using $\ell_2$ (dashed), $\ell_1$ (dashdot) and T-Trend
(thin line) smoother. The thick line is the true state.
\label{jumpSmoother1} }
\end{figure*}
%
%

\subsection{Reconstruction of a sudden change in state in the presence of outliers}
\label{NumexpTT}

Until now, we have considered robust and trend applications 
separately, in order to compare with previous robust smoothing formulations 
and to highlight the main features of the trend-filtering problem. 
A natural extension is to consider these features in tandem --- in other words, 
can we smooth a track which has {\it both} outliers and a sudden change in state? 
In fact, smoothers of this nature (but exploiting convex formulations) 
have already been proposed~\cite{Farahmand2011}. 

The challenge to building such a strong smoother  
is that without prior knowledge, it is difficult to tell the difference 
between a bad measurement (an outlier)
and a good measurement that may be consistent with a sudden change in the state.  
In many cases, the user will be aware that some sensors are reliable, while others are subject 
to contamination. This kind of prior information can now easily be incorporated
 using the generality and flexibility of section~\ref{GenT}, so that 
the user may specify trustworthy sensors (by modeling corresponding
residuals {\it indices} with Gaussians) as well as stable state components 
(by modeling corresponding innovation residual {\it indices} with Gaussians). 
Note that this is very different from specifying which of the individual {\it measurements} are reliable,
or which individual {\it transitions} follow the process model.

In this section, we consider a situation where we have a trustworthy sensor 
$s_1$ and an occasionally malfunctioning sensor $s_2$. Sensor $s_2$
gives frequent measurements, but some proportion of the time is subject 
to heavy contamination, while sensor $s_1$ gives measurements rarely,
but they are trustworthy (i.e. only subject to small Gaussian noise).  
Using the flexible interface implemented in~\cite{ckbs}, we can 
model $s_1$ errors as Gaussian and $s_2$ errors as Student's t. 

We use setup in section~\ref{NumexpT-Trend} together with the Gaussian 
outlier contamination scheme described in section~\ref{sec:LinearExample}. 
Both measurements are direct, so the measurement matrix in this case is 
$$
H_k x_k  =  \begin{bmatrix} 0 & 1 \\ 0 & 1\end{bmatrix} x_k \;,
\qquad R_k = \begin{bmatrix} \sigma^2  \\ & \sigma^2\end{bmatrix}\;.
$$
Since in the ckbs interface, the user specifies $R_k^{-1}$ rather than $R_k$, 
missing measurements are easily specified by setting the corresponding component of 
$R_k^{-1}$ to $0$. 

For the contaminated sensor $s_2$, we consider $p = .2$ contamination level, and 
$\phi = 100$, very large contaminating variance. We have $s_2$ measurements at 
every time step, but $s_1$ measurements only at every $10$th time step.

The results are shown in figure~\ref{DoubleT}. 
    Measurements are plotted using diamonds, 
   with $s_2$ measurements represented by small symbols, while 
	$s_1$ measurements are represented by large symbols.
   Ground truth is shown using a sold black line, and smoother results are shown
   using a red dashed line. 
Results in panel (a)  were obtained using
the least squares smoother, which cannot handle outliers. 
Results in panel (b) were obtained using T-Robust only, applying
Student's t modeling only to the measurement components. The resulting fit 
is much better, but the smoother struggles to follow the jump in the track,
overestimating the curve before the jump and under-estimating it after the jump.
Results in panel (c) were obtained by the Double T smoother, 
which modeled all residuals and innovations using Student's t. 
Double T follows the curve well before the jump, but not after. 
Note that there are a couple of measurements sitting along the sharp
jump --- the Double T suspects these to be outliers, only trusting the 
concentrated measurements to the right of the jump. 

Finally, results in panel (d) were obtained by using the information about 
which measurements are reliable. Specifically, Student's t modeling was 
used for all innovations residuals and for $s_2$, and Gaussian modeling 
was used for the $s_1$ component. This smoother ignores the outliers
and is able to follow the jump very well, since it takes advantage of the fact
that there is a reliable measurement that happens to be sitting right
in the middle of the transition.  

The file used to generate the subplots in the figure is 
\verb{noisy_jump_two_meas.m{, 
which can be accessed through the \verb{example{ subdirectory
of~\cite{ckbs}.

\begin{figure*}
\begin{center}
{\includegraphics[scale=0.33]{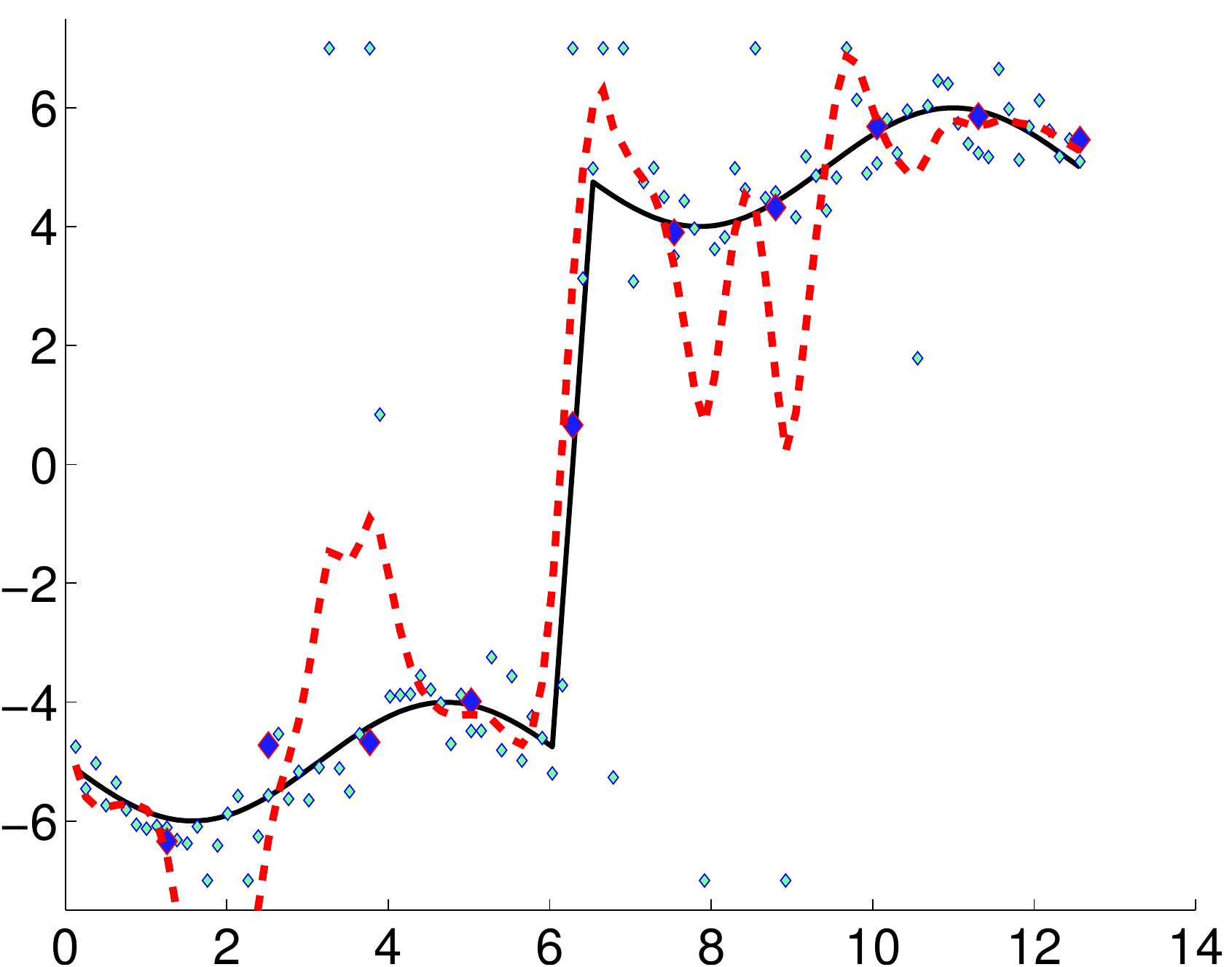}}
(a)
{\includegraphics[scale=0.33]{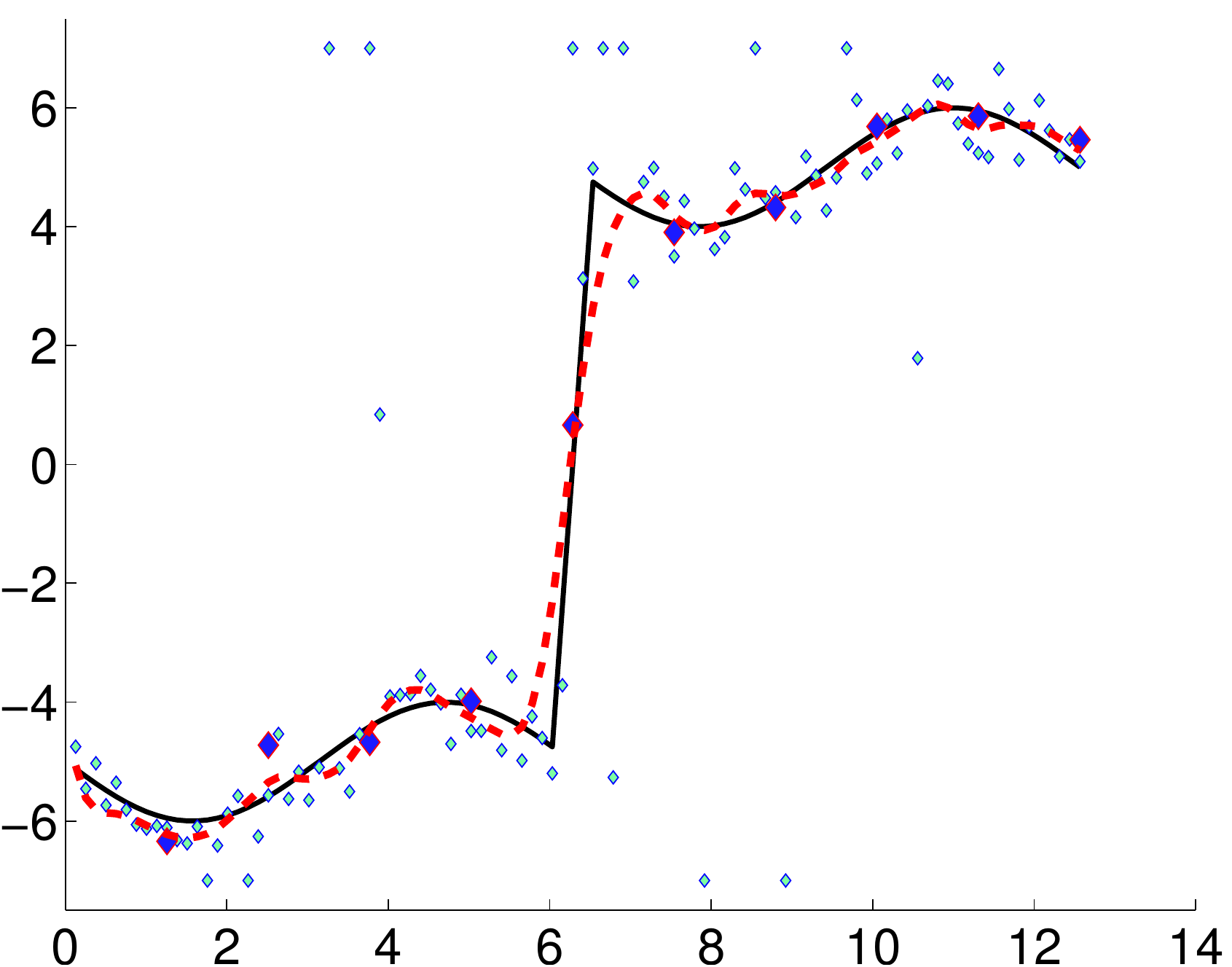}}
(b)
{\includegraphics[scale=0.33]{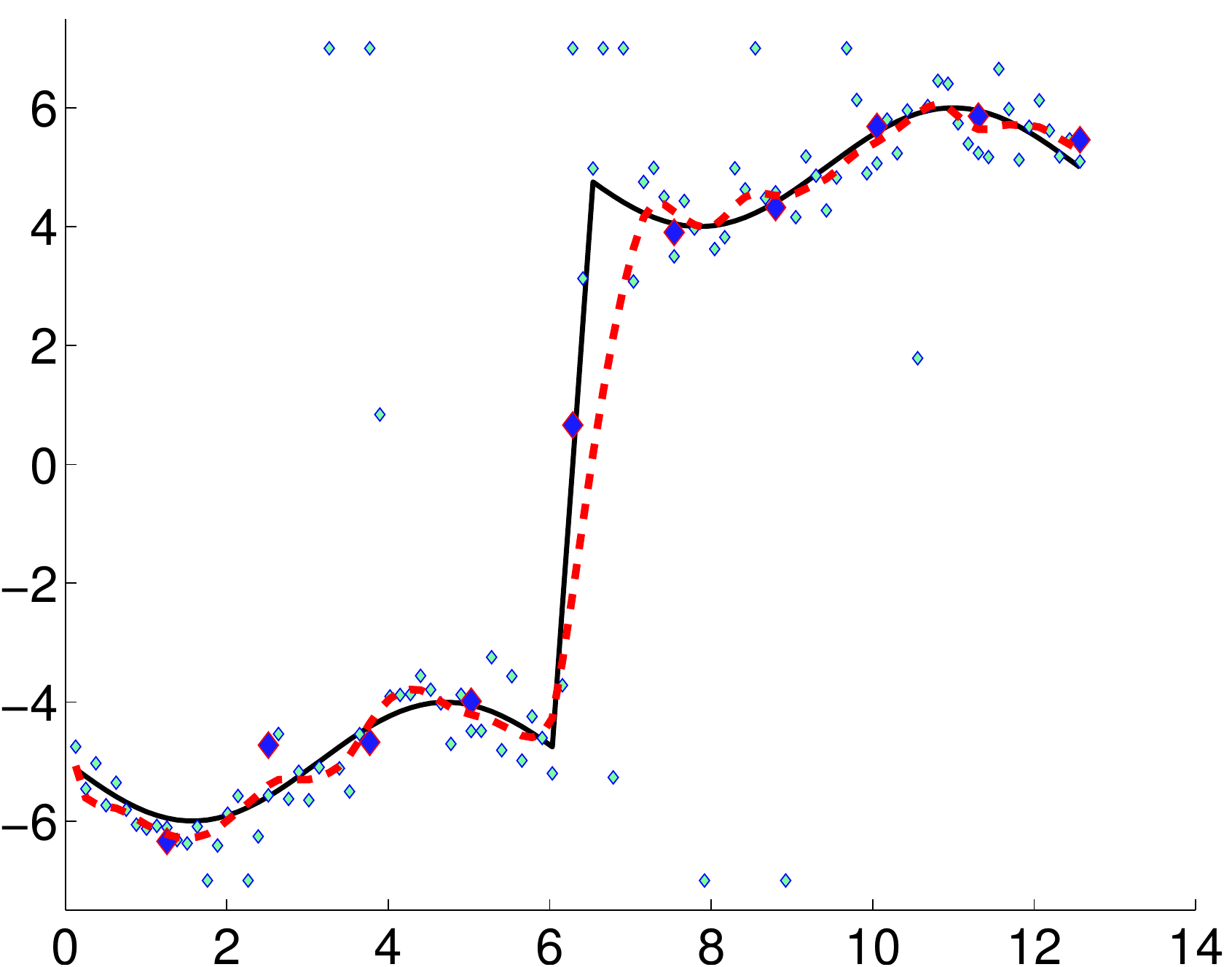}}
(c)
{\includegraphics[scale=0.33]{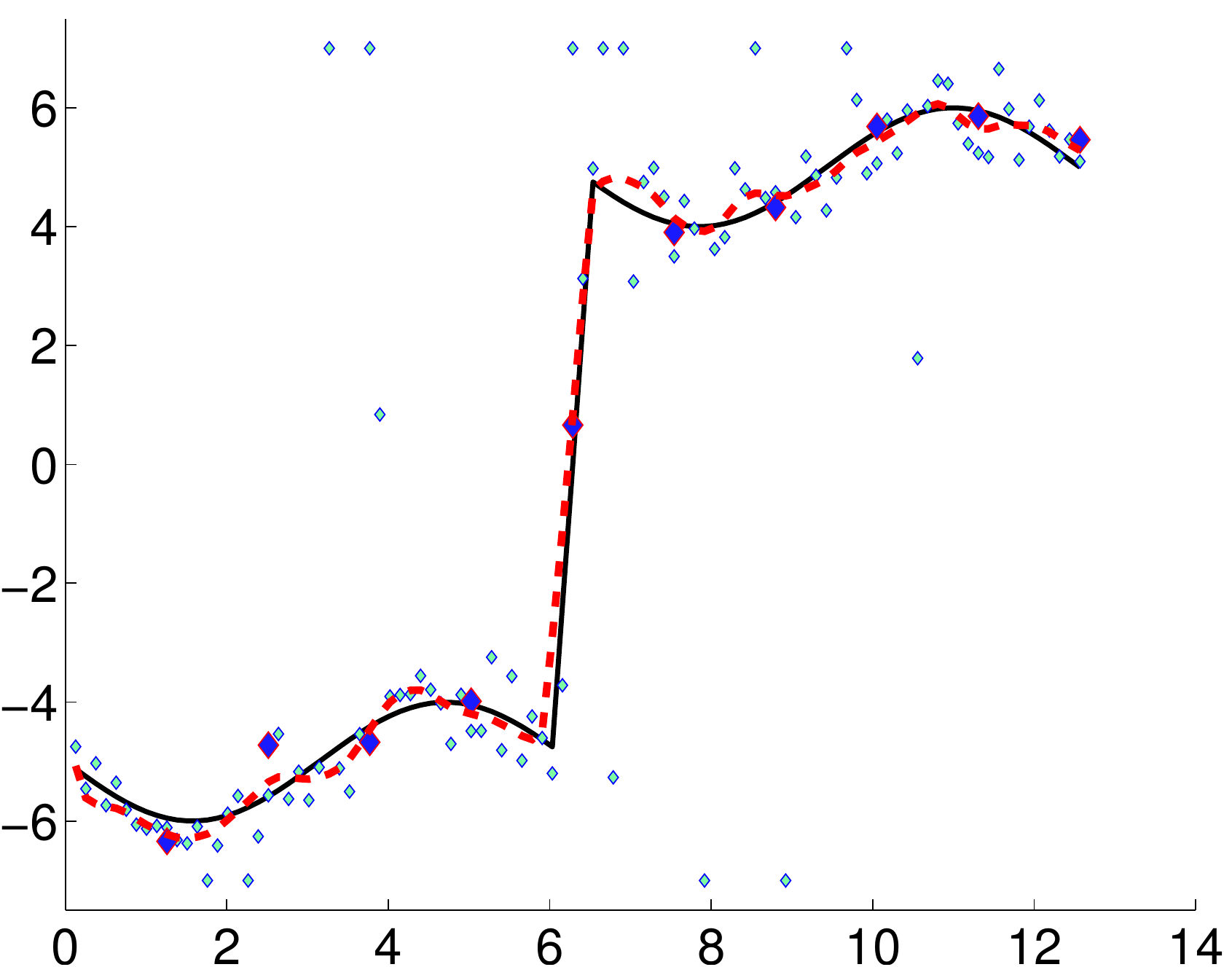}}
(d)

\end{center}
\caption{
    \label{DoubleT}
   Tracking a sudden change in the presence of outliers. Measurements are plotted using diamonds, 
   with $s_2$ measurements (frequent, contaminated) represented by small symbols, while 
	$s_1$ measurements (rare, reliable) represented by large symbols.
	 Outliers appear on the axes when they are out of range of the plot limits. 
   Ground truth is shown using a sold black line, and smoother results are shown
   using a red dashed line. 
   (a): Least squares smoother (Gaussian errors for process and measurements)
   is very vulnerable to outliers.
    (b): T-Robust only (Gaussian errors for process components and $s_1$, Student's t errors for $s_2$) effectively ignores the outliers, 
    but struggles to follow sharp change in state.  
        (c): Double T smoother (Student's t errors for all components): ignores outliers, but struggles to follow sudden change in state.      
        (d): Trend following robust smoother (Student's t errors for process components and $s_2$; Gaussian for $s_1$): ignores outliers and follows sudden change in state, using information in the reliable measurements. 
}
\end{figure*}

\section{Discussion and Conclusions}

We have presented a generalized Student's t smoothing framework, 
which allows modeling any innovations or measurement residuals 
using Student's t errors, and includes 
T-Robust and T-Trend, and Double T smoothers as important special cases. 
All of the smoothers in the framework efficiently solve for the MAP estimates of
the states in a state-space model
with any selected set of residuals modeled using Student's t or Gaussian noise. 
We have shown that these features can be used independently and in tandem, 
work for linear and nonlinear process models, and can be used both 
for outlier-robust smoothing and for tracking sudden changes in the state. 

Similar to contributions in other applications, e.g. sparse system identification
\cite{Wipf_IEEE_TSP_2007,CPNIPS2010,Tipping2001}, our
results underscore the significant advantages of using
heavy tailed distributions in statistical modeling.
Heavy tailed distributions force the use of non-convex
loss functions to solve for the associated MAP estimates~\cite[Theorem 2]{AravkinFHV:2012}.
The consequent challenge is to
optimize a non-convex objective even when the system dynamics are
linear. In contrast to the convex case, this requires an iterative smoother.
The convergence analysis for these methods is still developed
within the general framework of convex-composite optimization~\cite{Burke85}, although
the details of the analysis differ.

Because the problems are non-convex, iterative methods
may converge to local rather than global minima.
This problem can be mitigated by an appropriate
initialization procedure---for example, in the presence of outliers, the
$\ell_1$-Laplace smoother can be used to obtain a starting point for the optimizer,
in which case we can improve on the $\ell_1$ solution when the data is highly
contaminated with outliers. This approach was not taken in our numerical
experiments, which used the same initial points. 
For all the linear experiments, the initial point was simply the null state sequence. 
For the Van Der Pol, the initial state $x_0$ was correctly specified in all experiments, 
and the remaining state estimates in the initial sequence were null. 

The T-Robust smoother compares favourably to the $\ell_1$-Laplace
smoother described in \cite{AravkinL12011}, and outperforms it
in our experiments when the data is heavily contaminated by outliers.
The T-Trend smoother was designed for tracking signals that may
exhibit sudden changes, and therefore has many potential applications
in areas such as navigation and financial trend tracking. It was demonstrated
to follow a fast jump in the state better than a smoother with a
convex penalty on model deviation. Finally, we demonstrated the 
power of a new method by tracking a fast change in the presence 
of outliers using the full flexibility of the presented framework, 
which allowed us to differentially model residuals for sensors
which we knew to be reliable vs. unreliable, and to design
a smoother that was robust to outliers yet able to track sudden changes.  

An important question in the design and implementation of Student's t-based smoothers
is how to estimate the degree of freedom parameter $\nu$. In all of our experiments,
we have treated this parameter as fixed and know. We note that there
are established EM-based methods in the literature for estimating these parameters~\cite{Lange1989, Fahr1998},
as well as recently proposed methods~\cite{AravkinVanLeeuwen2012}, 
and we leave the implementation of these extensions in the Kalman smoothing
framework to future work.

\section{Acknowledgements}
The authors would like to thank Bradley Bell and North Pacific Acoustic
Laboratory (NPAL) investigators of the Applied Physics Laboratory,
University of Washington for the underwater tracking
data used in this paper (NPAL is sponsored by the Office of
Naval Research code 321OA). We are also grateful to Michael
Gelbart for insightful discussions about the numerical experiments.

\bibliographystyle{plain}
\bibliography{biblio}

\end{document}